\documentclass[11pt]{article}

\usepackage[a4paper,margin=1in]{geometry}
\usepackage{amsmath,amssymb,amsthm,mathtools}
\usepackage{bm}
\usepackage{hyperref}
\usepackage{enumitem}
\usepackage{xcolor}
\usepackage{tikz}
\usetikzlibrary{matrix,positioning,calc}

\newtheorem{theorem}{Theorem}
\newtheorem{lemma}[theorem]{Lemma}
\newtheorem{corollary}[theorem]{Corollary}

\theoremstyle{definition}
\newtheorem{definition}[theorem]{Definition}
\newtheorem{assumption}[theorem]{Assumption}
\newtheorem{remark}[theorem]{Remark}
\newtheorem{example}[theorem]{Example}

\newcommand{\C}{\mathbb{C}}
\newcommand{\Z}{\mathbb{Z}}
\newcommand{\R}{\mathbb{R}}

\newcommand{\ellTwo}{\ell^2}
\newcommand{\norm}[1]{\left\|#1\right\|}

\newcommand{\supp}{\operatorname{supp}}
\newcommand{\dist}{\operatorname{dist}}
\newcommand{\Lip}{\operatorname{Lip}}

\hypersetup{
  hidelinks,
  pdftitle={Spectral and Pseudospectral Approximation of
            Finite-Interaction-Range Operators in Doubling Metric Measure Spaces},
  pdfauthor={Mattes Wittig},
  pdfsubject={Spectral theory},
  pdfkeywords={pseudospectrum, finite sections, band operators,
               doubling metric measure spaces}
}

\newcommand{\nuop}{\nu}

\title{Spectral and Pseudospectral Approximation of Finite-Interaction-Range Operators in Doubling Metric Measure Spaces}
\author{Mattes Wittig\thanks{
  Hamburg University of Technology (TUHH), Institute of Mathematics, Am Schwarzenberg-Campus 3, D-21073 Hamburg, Germany, \href{mailto:mattes.wittig@tuhh.de}{mattes.wittig@tuhh.de}
}}
\date{}

\begin{document}
\maketitle

\begin{abstract}
We study bounded finite-interaction-range operators $H$ on $\ell^2(\Gamma)$, where $\Gamma$ is a uniformly discrete subset of a left-invariant doubling metric measure space $(X,d,\mu)$. Our goal is to approximate spectral information of $H$ from \emph{finite sections} $H_{L,x}$ supported on balls $B_L(x)\cap\Gamma$. The main technical input is a commutator estimate for Lipschitz ``tent'' localisations $W_{L,x}$, which depends only on geometric properties (doubling) and a uniform interaction degree. As a consequence, we obtain explicit two-sided pseudospectral inclusion bounds of the form
\[
  \gamma_{L,\varepsilon}(H)\subset \sigma_\varepsilon(H)\subset \gamma_{L,\varepsilon+C_0/L}(H),
\]
and Hausdorff convergence of window pseudospectra to the (global) pseudospectrum as $L\to\infty$. In the self-adjoint/ normal case this yields computable gap tests and spectral sampling schemes with rigorous $O(1/L)$ error control, while in the non-normal case it leads to corresponding approximation results for pseudospectra. The framework isolates the geometric core behind earlier approaches on $\R^n$ and on countable Abelian groups, and covers irregular geometries arising in quasicrystal models, as well as new cases such as discrete (non-Abelian) nilpotent groups (including for example the discrete Heisenberg group).
\end{abstract}

{\small
\noindent
\textit{2020 Mathematics Subject Classification.}
Primary 47A10; Secondary 47A58, 47B28, 47B93, 47N40.

\smallskip

\noindent
\textit{Keywords.}
Finite-interaction-range operators, band matrix, spectral approximation,
pseudospectra, finite sections, lower norms,
doubling metric measure spaces.
}

\section{Introduction and motivation}

Many operators arising in mathematical physics, solid state theory, and aperiodic
order act on an infinite set $\Gamma$, such as lattices, Delone sets, or model
sets. Typical examples include discrete Schr\"odinger operators and tight-binding
Hamiltonians with finite interaction range. Their spectral analysis goes back to
classical models for electrons in crystals and magnetic fields, where Azbel
\cite{Azbel1964} and Hofstadter \cite{Hofstadter1976} already observed remarkably
complicated spectral behaviour. In later work on quasiperiodic operators, Avron,
van Mouche, and Simon \cite{AvronVanMoucheSimon1990} established continuity
results for the almost Mathieu spectrum. Closely related questions for aperiodic
operators were studied by S\"ut\H{o} \cite{Suto1987} and by Bellissard, Iochum,
and Testard \cite{BellissardIochumTestard1991}. The same circle of ideas also
includes the Ten Martini Problem, resolved by Avila and Jitomirskaya
\cite{AvilaJitomirskaya2009}. In the present paper, the central objects of interest are the spectrum
$\sigma(H)$ and the $\varepsilon$--pseudospectrum
$\sigma_\varepsilon(H)$. 

Since the spectrum and pseudospectrum of the full operator are generally not
directly accessible by finite computation, one seeks to recover them from
finite-dimensional restrictions. For non-normal operators, pseudospectra provide
a substantially more stable computational target than the spectrum; see, e.g.,
\cite{TrefethenEmbree2005}. This raises the question of how much spectral
information about an infinite operator can be recovered from suitable finite
sections.

A classical approach is the \emph{finite section method}, in which an
infinite matrix is truncated to finite square submatrices. While such
square truncations work well in certain translation--invariant settings,
they often fail to provide reliable error control for more general
operators, especially in the presence of non--normality or irregular
geometry.

A more robust strategy is to work with \emph{local finite sections} that retain
all interactions emanating from a finite region. Concretely, for $L>0$ and
$x\in X$ one considers a rectangular restriction
\[
  H_{L,x}:\ell^2(B_L(x)\cap\Gamma)\longrightarrow \ell^2(B_{L+m}(x)\cap\Gamma),
\]
where $m$ is the interaction range of $H$.
The relevant spectral information is encoded by the lower norms
$\nu$ or, equivalently in finite dimensions, by the
smallest singular values of the corresponding rectangular matrices.

This leads to the following fundamental question.

\medskip
\noindent\textbf{Core question.}
\emph{How does the global lower norm $\nu(H-\lambda)$ relate to the collection of
local lower norms $\nu(H_{L,x}-\lambda)$, and can this relation be quantified
uniformly in terms of the window size $L$?}

\medskip

\noindent\textbf{Previous work.}
Two complementary lines of research motivate the present work.
On the one hand, commutator--based localisation methods on $\R^n$ use smooth or
Lipschitz cutoffs to control boundary errors for differential and difference
operators \cite{HegeMoscolariTeufel2026}. Closely related ideas also appear in \cite{Hege2024Thesis,HegeMoscolariTeufel2022}, together with applications to
spectral gaps in quasicrystals and systems of finite local complexity.
On the other hand, band--operator techniques on countable Abelian groups exploit
translation invariance to relate infinite operators to finite sections \cite{LindnerSeifertChandlerWildePreprint}. Previously, similar questions were studied
for $G=\mathbb{Z}$ in
\cite{ChandlerWildeChonchaiyaLindner2023,ChandlerWildeChonchaiyaLindner2024}, with a finite-matrix follow-up in \cite{ChandlerWildeLindner2025}.

A related perspective is provided by Beckus and Takase \cite{BeckusTakase2025},
who study dynamically-defined normal operator families on amenable groups and
derive quantitative Hausdorff continuity estimates for spectra, building in part on
earlier work on spectral continuity for aperiodic quantum systems and aperiodic
Hamiltonians \cite{BeckusBellissardDeNittis2018,BeckusBellissardCornean2019}. In particular,
their framework also applies to non-Abelian group actions, such as the discrete
Heisenberg group, and has recently been applied to spectral approximation for
aperiodic substitution systems; see \cite{BandBeckusPogorzelskiTenenbaum2025}.
All these approaches rely, implicitly or explicitly, on geometric information of the underlying structure. 
From a computational perspective, complementary approaches based on the
Solvability Complexity Index (SCI) hierarchy have been developed in
\cite{Hansen2011,BenArtziColbrookHansenNevanlinnaSeidel2020,
ColbrookHansen2023, ColbrookEmbreeFillman2026}. These works classify spectral problems according to
the number and type of limiting processes required for their solution and
provide both constructive algorithms and corresponding impossibility
results.

\medskip

\noindent\textbf{Contribution.}
The purpose of this paper is to isolate the \emph{geometric core} behind these
methods and to formulate localisation estimates that do not require $\Gamma$
itself to carry a group structure.
We work in a left--invariant doubling metric measure space $(X,d,\mu)$ and assume
only that $\Gamma\subset X$ is countable and uniformly discrete. To keep the notation
simple, we also impose relative denseness in the main body of the paper.\footnote{Relative denseness is used mainly to avoid empty local windows. Section~8 explains how the results can be adapted without this
assumption.} For the sake of clarity, we present the main arguments only in the Hilbert-space setting $\ell^2(\Gamma)$; extensions to $\ell^p(\Gamma)$ for $1\le p\le\infty$, as well as to Banach-valued spaces $\ell^2(\Gamma,E)$, are discussed separately in the final section.
For bounded band operators of finite interaction range we prove:

\begin{itemize}[leftmargin=2em]
\item a uniform commutator estimate for Lipschitz tent localisations,
\item localisation of quasi--eigenvectors with an explicit $O(1/L)$ error,
\item two--sided inclusions for window pseudospectra and Hausdorff convergence
      to the global pseudospectrum,
\item computable spectral gap tests and sampling schemes in the self--adjoint/ normal case, as well as corresponding approximation results for
      pseudospectra in the non--normal case.
\end{itemize}

The resulting error bound of order $1/L$ is essentially optimal for compactly supported
Lipschitz cutoffs.
Our framework recovers known results on $\R^n$, see \cite{HegeMoscolariTeufel2026}, and on countable Abelian groups, see \cite{LindnerSeifertChandlerWildePreprint}, as special
cases, while also covering irregular geometries arising in quasicrystal models.
In particular, it applies to operators on discrete nilpotent groups, such as the
discrete Heisenberg group, which fall outside the scope of purely Abelian or
periodic methods.

\section{Notation and analytic tools}
\label{sec:tools}

This section collects the operator--theoretic quantities used throughout the paper
to measure \emph{near--singularity} of operators and to describe (pseudo)spectral
behaviour in terms of lower bounds.  The key point is that these notions are
defined by optimisation over vectors and therefore admit natural \emph{local}
variants on finite windows, which later connect the global operator to its
finite sections.

\subsection{Lower norm}
\label{sec:lower-norms}

Let $X$ be a complex Banach space and $A\in\mathcal{B}(X)$.
The \emph{lower norm} of $A$ is
\begin{equation}\label{eq:lower-norm}
  \nu(A)
  := \inf\{\|A\psi\| : \psi\in X,\ \|\psi\|=1\}
  = \inf_{\psi\neq 0}\frac{\|A\psi\|}{\|\psi\|}.
\end{equation}
We emphasise that $\nu(\cdot)$ is not a norm (it does not satisfy a triangle
inequality), but it behaves like an ``inverse norm'' in many basic estimates;
see, e.g., \cite{Lindner2006}.

\medskip

\noindent
\textbf{Local lower norm.}
In our applications, we work on $\mathcal H=\ell^2(\Gamma)$ for an index set $\Gamma$, and
localisation is naturally expressed by support restrictions. For a nonempty set
$T\subset\Gamma$ we define the \emph{local lower norm} by
\begin{equation}\label{eq:local-lower-norm}
  \nu_T(A)
  := \inf\{\|A\psi\| : \psi\in \ell^2(\Gamma),\ \|\psi\|=1,\ \supp\psi\subset T\}.
\end{equation}
If $T\subset U\subset\Gamma$, then
\begin{equation}\label{eq:local-lower-norm-monotone}
  \nu_T(A)\;\ge\;\nu_U(A)\;\ge\;\nu(A),
\end{equation}
since the infimum is taken over a smaller class of vectors.\footnote{More conceptually, on an arbitrary Banach space $X$ one may define restricted lower norms by subspaces: for a closed subspace $Y\subset X$ set $\nu_Y(A):=\inf\{\|Ax\|:x\in Y,\ \|x\|=1\}$. In this formulation, monotonicity ($Y\subset Z\Rightarrow \nu_Y(A)\ge \nu_Z(A)$) and the perturbation bound $|\nu_Y(A)-\nu_Y(B)|\le\|A-B\|$ hold in full generality.}

\medskip

We will repeatedly use the following standard properties.  First, the (local) lower norm
is \emph{Lipschitz continuous} with respect to the operator norm: for
$A,B\in\mathcal B(\mathcal H)$,
\[
  |\nu(A)-\nu(B)|\le \|A-B\|,
  \qquad
  |\nu_T(A)-\nu_T(B)|\le \|A-B\|\quad(T\subset\Gamma).
\]
Second, $\nu(A)>0$ is equivalent to $A$ being bounded from below, i.e.\ there
exists $c>0$ such that $\|A\psi\|\ge c\,\|\psi\|$ for all $\psi\in\mathcal H$
(equivalently, $A$ is injective with closed range).  To obtain surjectivity (and hence bounded invertibility) of $A$, one also has
to control the adjoint: $A$ is surjective if and only if $A^*$ is injective
with closed range, equivalently bounded from below, i.e.\ $\nu(A^*)>0$.
It is therefore convenient to combine both quantities into the \emph{combined lower norm} 
\[ 
\nu_{comb}(A):=\min\{\nu(A),\,\nu(A^*)\}, 
\] 
where $A^*$ denotes the Hilbert-space adjoint.\footnote{More generally, one may take
the Banach adjoint; in this paper we work on $\ell^2$--spaces.}
With the convention $\|A^{-1}\|=\infty$
(and hence $\|A^{-1}\|^{-1}=0$) whenever $A$ is not invertible, one has the identity
(see \cite[Thm.~1.5.5]{Lindner2006} and \cite{Davies2007})
\begin{equation}
\label{eq: lower norm equals norm inverse}
  \|A^{-1}\|^{-1}=\nu_{comb}(A).
\end{equation}
(Our notation differs from parts of the literature where the combined lower norm is denoted by $\mu$; we reserve $\mu$ for measures.)

\medskip

\noindent
\textbf{Quasi-modes (global and local).}
The lower norm admits a natural interpretation in terms of approximate
eigenvectors. A vector $\psi\in \ell^2(\Gamma)\setminus\{0\}$ is called a
\emph{(global) $\varepsilon$-quasi-mode} for $A$ at $\lambda\in\C$ if $\|(A-\lambda)\psi\|\;\le\;\varepsilon\,\|\psi\|$.
Equivalently, $\nu(A-\lambda)$ is the infimum over all $\varepsilon$ for which such
quasi-modes exist.

If one additionally requires $\supp\psi\subset T\subset\Gamma$, then $\psi$ is
called a \emph{local $\varepsilon$-quasi-mode} (supported in $T$). In particular,
$\nu_T(A-\lambda)$ is the infimum over all $\varepsilon$ for which there exists a
quasi-mode with support contained in $T$.

In view of \eqref{eq:local-lower-norm-monotone}, the existence of local
$\varepsilon$-quasi-modes corresponding to an $\varepsilon$-pseudoeigenvalue immediately
implies the existence of global quasi-modes with the same $\varepsilon$.
The main result of this paper establishes a converse statement: the existence
of global $\varepsilon$-quasi-modes corresponding to an $\varepsilon$-pseudoeigenvalue
implies the existence of local $(\varepsilon + C_0/L)$-quasi-modes (supported in $B_L(x)$ for some
$x\in X$) corresponding to an $(\varepsilon + C_0/L)$-pseudoeigenvalue, with
explicit control of the error.

\subsection{Spectrum and pseudospectrum}
\label{sec:pseudospectrum}

Let $A\in\mathcal{B}(X)$ on a complex Banach space $X$.
The spectrum and resolvent set are
\[
  \sigma(A) := \{\lambda\in\C : A-\lambda I \text{ is not invertible}\},
  \qquad
  \rho(A):=\C\setminus\sigma(A),
\]
and the resolvent is $R(\lambda,A):=(A-\lambda I)^{-1}$ for $\lambda\in\rho(A)$.

\medskip

\noindent
\textbf{Pseudospectra via lower norms.}
For $\varepsilon>0$ we define the \emph{open} and \emph{closed}
$\varepsilon$--pseudospectra of $A$ by
\begin{align}
  \sigma_\varepsilon(A)
  &:= \{\lambda\in\C : \nu_{comb}(A-\lambda I) < \varepsilon\},\label{eq:pseudo-open}\\
  \Sigma_\varepsilon(A)
  &:= \{\lambda\in\C : \nu_{comb}(A-\lambda I) \le \varepsilon\}.\label{eq:pseudo-closed}
\end{align}
By \eqref{eq: lower norm equals norm inverse}, these definitions are equivalent to the
classical resolvent--norm formulation (see, e.g.,
\cite{Globevnik1976,DaviesShargorodsky2015,TrefethenEmbree2005}):
\[
  \sigma_\varepsilon(A)
  = \{\lambda\in\C : \|R(\lambda,A)\| > \varepsilon^{-1}\},
  \qquad
  \Sigma_\varepsilon(A)
  = \{\lambda\in\C : \|R(\lambda,A)\| \ge \varepsilon^{-1}\},
\]
where we interpret $\|R(\lambda,A)\|=\infty$ for $\lambda\in\sigma(A)$.

\medskip

\noindent
As preimages of open, respectively closed, subsets of\/ $\R$ under the continuous map
$\lambda\mapsto \nu_{comb}(A-\lambda I)$, the sets $\sigma_\varepsilon(A)$ and
$\Sigma_\varepsilon(A)$ are open, respectively closed.

\medskip

\noindent
\textbf{Open vs.\ closed pseudospectra.}
By continuity one always has $\overline{\sigma_\varepsilon(A)}\subset
\Sigma_\varepsilon(A)$. Equality can fail in general Banach spaces if
$\nu_{comb}(A-\lambda I)$ has a level set with interior points\footnote{This means that there exist $\lambda_0\in\C$ and $r>0$ such that
$\nu_{comb}(A-\lambda I)$ is constant on the ball $B_r(\lambda_0)$, i.e.\ the level set
$\{\lambda:\nu_{comb}(A-\lambda I)=\nu_{comb}(A-\lambda_0 I)\}$ contains a nonempty open subset.} (see
\cite{Shargorodsky2008}). However, for a large class of spaces --- in
particular for Hilbert spaces --- the resolvent norm cannot be locally constant
on any open set (results of \cite{Globevnik1976,Shargorodsky2008}).\footnote{One sufficient criterion is (complex) uniform convexity of $X$ or of
its dual $X^*$: in this case the resolvent norm cannot be locally constant on
any nonempty open subset of $\rho(A)$, hence
$\overline{\sigma_\varepsilon(A)}=\Sigma_\varepsilon(A)$ for all $\varepsilon>0$;
see \cite{Globevnik1976} and \cite{Shargorodsky2008}, and also the discussion in
\cite{DaviesShargorodsky2015}. In particular, every Hilbert space belongs to
this class.}
In this ``Globevnik class'' one has
\begin{equation}\label{eq:open-closed-agree}
  \Sigma_\varepsilon(A)=\overline{\sigma_\varepsilon(A)}\qquad(\varepsilon>0).
\end{equation}
In the Hilbert space setting, \eqref{eq:open-closed-agree} is always available.
\subsection{Rectangular operators and local pseudospectra}
\label{sec:rectangular-pseudo}
Later we will consider finite sections (or local restrictions) of operators,
which naturally lead to \emph{rectangular} maps between two different
$\ell^2$--spaces. In this situation it is convenient to define pseudospectra
purely in terms of lower norms.

\begin{definition}[Rectangular $\varepsilon$--pseudospectrum]\label{def:rectangular-pseudo}
Let $T\subset T'\subset\Gamma$ be finite and let
\[
  A:\ell^2(T)\longrightarrow \ell^2(T')
\]
be bounded. Denote by $I_T:\ell^2(T)\to\ell^2(T')$ the canonical embedding
\[
  (I_T x)(t) =
  \begin{cases}
    x(t), & t\in T,\\
    0,    & t\in T'\setminus T.
  \end{cases}
\]
The \emph{rectangular open} and \emph{rectangular closed} $\varepsilon$--pseudospectra of $A$ are
\[
  \sigma_\varepsilon(A)
  := \{\lambda\in\C : \nu(A-\lambda I_T) < \varepsilon\},
  \qquad \varepsilon>0,
\]
and
\[
  \Sigma_\varepsilon(A)
  := \{\lambda\in\C : \nu(A-\lambda I_T) \le \varepsilon\},
  \qquad \varepsilon\ge 0.
\]
Since $T,T'$ are finite, $\nu(A-\lambda I_T)$ equals the minimal singular value of $A-\lambda I_T$,
i.e.\ $\nu(A-\lambda I_T)^2$ is the smallest eigenvalue of $(A-\lambda I_T)^*(A-\lambda I_T)$.
\end{definition}

\begin{remark}[Why we do not use the combined lower norm]\label{rem:rectangular-no-combined}
Let $B:=A-\lambda I_T$. If $T\subsetneq T'$, then $B:\ell^2(T)\to\ell^2(T')$ with
$\dim\ell^2(T')>\dim\ell^2(T)$. Hence $B^*:\ell^2(T')\to\ell^2(T)$ is never
injective, so $\nu(B^*)=0$ for all $\lambda$. The combined quantity
$\nu_{comb}(B)=\min\{\nu(B),\nu(B^*)\}$ therefore carries no additional
information in the rectangular setting. If $T=T'$, then $\nu(B^*)=\nu(B)$ and therefore
$\nu_{\mathrm{comb}}(B)=\nu(B)$.
\end{remark}

\subsection{Hausdorff convergence and the Globevnik property}
\label{sec:hausdorff-globevnik}

When we speak about convergence of set--valued spectral approximations, we use
Hausdorff distance on bounded subsets of\/ $\C$.  For nonempty bounded sets
$S,T\subset\C$ we set
\[
  d_H(S,T)
  := \max\Bigl\{
      \sup_{s\in S}\dist(s,T),\,
      \sup_{t\in T}\dist(t,S)
    \Bigr\},
  \qquad
  \dist(z,T):=\inf_{w\in T}|z-w|.
\]
This defines a metric on compact subsets of\/ $\C$ and, more generally, a
pseudometric on bounded subsets; in particular, on bounded sets the Hausdorff pseudometric only depends on closures,
\[
  d_H(S,T)=0 \quad\Longleftrightarrow\quad \overline{S}=\overline{T}.
\]

\medskip

With \eqref{eq:open-closed-agree} at hand,
one may freely switch between open and closed $\varepsilon$--pseudospectra when
discussing Hausdorff convergence: for fixed $\varepsilon>0$ and any bounded sets
$S_n,S\subset\C$,
\[
  S_n \xrightarrow[d_H]{} \sigma_\varepsilon(A)
  \quad\Longleftrightarrow\quad
  S_n \xrightarrow[d_H]{} \Sigma_\varepsilon(A).
\]
Moreover, on spaces satisfying the Globevnik property, the family of
$\varepsilon$--pseudospectra depends Hausdorff--continuously on $\varepsilon>0$.\footnote{For the closed sets $\Sigma_\varepsilon(A)=\{\nu_{comb}(A-\lambda I)\le\varepsilon\}$,
the monotone limit for $\varepsilon_n\downarrow\varepsilon$ is immediate from
$\bigcap_n\Sigma_{\varepsilon_n}(A)=\Sigma_\varepsilon(A)$.
The Globevnik property is only needed for the monotone increase
$\varepsilon_n\uparrow\varepsilon$, since then
$\bigcup_n\Sigma_{\varepsilon_n}(A)=\{\nu_{comb}(A-\lambda I)<\varepsilon\}$
and one must identify the closure of this union with $\Sigma_\varepsilon(A)$,
i.e.\ exclude locally constant level sets of the resolvent norm.}
In particular, if $\varepsilon_n\to\varepsilon>0$, then
\[
  d_H\bigl(\sigma_{\varepsilon_n}(A),\sigma_\varepsilon(A)\bigr)\to 0,
  \qquad
  d_H\bigl(\Sigma_{\varepsilon_n}(A),\Sigma_\varepsilon(A)\bigr)\to 0.
\]

Moreover, for the closed pseudospectra, the monotone convergence from above extends to $\varepsilon=0$ without requiring the Globevnik property, since
\[
    \bigcap_{\eta>0}\Sigma_\eta(A)
    = \Sigma_0(A)
    = \sigma(A).
\]

\section{Geometric and operator setting}

\subsection{Left-invariant metric--measure space}

Our localisation argument requires three ingredients on the ambient space $X$:
\emph{(i)} a notion of balls (hence a metric), \emph{(ii)} a way to average over
centres $x$ (hence a measure), and \emph{(iii)} compatibility with translations
so that the resulting estimates do not depend on the chosen centre.

\begin{assumption}[Standing left-invariant framework]\label{ass:standing}
Throughout, $(X,d,\mu)$ is a metric measure space such that
\begin{itemize}[leftmargin=2em]
\item $X$ is a group with identity $e$,
\item $d$ is a \emph{left-invariant} metric on $X$, i.e.\ $d(gx,gy)=d(x,y)$ for all $g,x,y\in X$, and $X$ is endowed with the topology induced by $d$
\item $\mu$ is a non-zero \emph{left-invariant} Radon measure on $X$ with respect to this topology, i.e. $\mu(gE)=\mu(E)$ for all Borel sets $E\subset X$ and $g\in X$,
\item the ball volume function
      \[
        V(r):=\mu(B_r(e)), \qquad B_r(x):=\{y\in X:d(x,y)\le r\},
      \]
      satisfies 
      \[
        0<V(r)<\infty \qquad\text{for all }r>0,
      \]
\item $V$ is \emph{doubling}, i.e.\ there exists $C_{\mathrm{dbl}}\ge1$ such that
      \[
        V(2r)\le C_{\mathrm{dbl}}V(r) \qquad\text{for all }r>0.
      \]
\end{itemize}
\end{assumption}

\medskip

This framework covers many natural examples. If $X$ is a locally
compact Hausdorff group, then Haar's theorem (see, e.g.,
\cite{folland95}) provides a non-zero left-invariant Radon measure
$\mu$. If, in addition, $X$ is second-countable, Struble's theorem
\cite{Struble1974} yields a proper left-invariant metric $d$ inducing
the topology. For such a choice of $d$ and $\mu$, closed metric balls
are compact and hence have finite measure, while nonempty open balls
have positive measure. Thus $0<V(r)<\infty$ for every $r>0$.
Although the doubling condition is a genuine geometric restriction,
the framework includes many standard examples, such as finitely
generated groups of polynomial growth and non-Abelian groups
(e.g.\ the discrete Heisenberg group).

\medskip

The standing assumptions also ensure the existence of countable, uniformly discrete and relatively dense subsets $\Gamma$ of $X$. Indeed, for any $r>0$, a maximal $r$-separated subset is relatively dense by maximality, while the finite-volume assumption on metric balls implies local finiteness and hence countability; see the packing argument below.

\subsection{Band operators and finite sections}
\label{sec:band-and-windows}

We now specify the operator class for which the lower-norm and pseudospectral
tools from the previous section will be applied.  The guiding principle is
\emph{finite-range interactions}: the action of the operator should be
determined by local couplings within a fixed metric radius.  This is the
natural setting for discrete Schr\"odinger-type models on uniformly discrete
sets, and it is precisely the locality that later allows us to compare global
and local lower norms.

\paragraph{Band operators (finite-interaction-range).}
Let $\Gamma\subset X$ be a countable, uniformly discrete, relatively dense\footnote{A subset $\Gamma\subset X$ is called uniformly discrete if there exists $r>0$ such that $d(p,q)\ge r$ for all distinct $p,q\in\Gamma$. The supremum of all such $r$ is called the separation radius of $\Gamma$. It is called relatively dense if there exists $R>0$ such that $B_R(x)\cap\Gamma\neq\emptyset$ for all $x\in X$. The infimum of all such $R$ is called the covering radius of $\Gamma$.} subset, and consider the
Hilbert space $\ell^2(\Gamma)$. A bounded operator
$H:\ell^2(\Gamma)\to \ell^2(\Gamma)$ is called a \emph{band operator} (or
\emph{finite-interaction-range operator}) if there exist constants $m>0$ and $M>0$ such that
its matrix entries
\[
  H_{pq} := \langle \delta_p,\,H\delta_q\rangle, \qquad p,q\in\Gamma,
\]
satisfy the uniform bound $|H_{pq}|\le M$ and the finite-interaction-range condition
\[
  d(p,q) > m \quad \Longrightarrow \quad H_{pq}=0.
\]
Equivalently, $H\delta_q$ is supported in $B_m(q)\cap\Gamma$ for every $q\in\Gamma$.
Thus $m$ controls the maximal interaction radius (or bandwidth), while $M$ provides a uniform
bound on the coupling strengths.\footnote{We work with closed balls in order to allow for a minimal choice of the interaction range $m$. Indeed, if $p\in B_L(x)$ and $q\in\Gamma$ satisfies $d(p,q)\le m$, then $q$ may lie on the boundary of $B_{L+m}(x)$, so that $B_{L+m}(x)$ is the smallest set that still captures all interactions.}

\paragraph{Interaction degree.}
Besides the interaction-range $m$, we will repeatedly use a uniform bound on the number of
nontrivial off-diagonal couplings per site.  We therefore set
\[
  \gamma
  := \sup_{q\in \Gamma}\#\bigl\{p\in \Gamma\setminus\{q\}:\ H_{pq}\neq 0 \;\; \text{or}\;\; H_{qp} \neq 0\bigr\}.
\]
In our standing metric--measure setting, $\gamma$ is automatically finite.
Indeed, if $H$ has interaction-range $m$, then $H_{pq}\neq 0$ implies
$d(p,q)\leq m$, hence
\[
  \gamma
  \;\le\;
  \sup_{q\in\Gamma}\#\bigl((B_m(q)\setminus\{q\})\cap\Gamma\bigr).
\]
Since $\Gamma$ is uniformly discrete, there exists $0<r<r_\Gamma$ such that the balls
$B_{r/2}(p)$, $p\in\Gamma$, are pairwise disjoint. For fixed $q\in\Gamma$,
all balls $B_{r/2}(p)$ with $p\in B_m(q)\cap\Gamma$ are contained in
$B_{m+r/2}(q)$, and by left-invariance of $\mu$ we obtain the packing bound
\[
  \#\bigl(B_m(q)\cap\Gamma\bigr)\,\mu\bigl(B_{r/2}(e)\bigr)
  \;\le\; \mu\bigl(B_{m+r/2}(q)\bigr)
  \;=\; \mu\bigl(B_{m+r/2}(e)\bigr).
\]
Consequently,
\[
  \gamma \;\le\; \frac{\mu\bigl(B_{m+r/2}(e)\bigr)}{\mu\bigl(B_{r/2}(e)\bigr)} - 1 \;<\;\infty,
\]
so the interaction degree is controlled purely by the geometry of
$(X,d,\mu)$, the separation radius $r_\Gamma$ of $\Gamma$, and the interaction radius $m$. The same argument shows that $B_L(x)\cap\Gamma$ is finite for every $L>0$ and $x\in X$. In particular, all spaces $\ell^2(B_L(x)\cap\Gamma)$ occurring below are finite-dimensional.

For later reference, and to avoid repeating the same hypotheses in each statement, we also fix the operator setting used throughout the
subsequent results.

\begin{assumption}[Standing operator setting]\label{ass:operators}
From now on, let $\Gamma\subset X$ be
countable, uniformly discrete and relatively dense. Let
$H$ be a band operator on $\ell^2(\Gamma)$ with interaction range
$m>0$ and matrix bound $M>0$, in the sense introduced above. We denote by
$\gamma$ the corresponding interaction degree.
\end{assumption}

\paragraph{Finite sections as rectangular operators.}
Our goal is to approximate the \emph{global lower norm} \(\nu(H-\lambda)\),
but in computations we can only test vectors with \emph{finite support}.
For band operators this is particularly natural: if \(\psi\) is supported in a
finite set, then \(H\psi\) is again supported in a finite neighbourhood, and
the possible spread of the support is controlled explicitly by the interaction
range. This motivates finite-dimensional (yet faithful) truncations of \(H\).

For \(L>0\) and \(x\in X\), let
\[
  P_{L,x}:\ell^2(\Gamma)\to \ell^2\bigl(B_L(x)\cap\Gamma\bigr)
\]
denote the orthogonal projection (restriction) onto \(\ell^2(B_L(x)\cap\Gamma)\),
and let \(P_{L,x}^*\) be the canonical embedding back into \(\ell^2(\Gamma)\)
(extension by zero).\footnote{Here relative denseness ensures that for sufficiently large \(L\), the sets \(B_L(x)\cap\Gamma\) are nonempty for all \(x\in X\), so that the finite sections are defined on nontrivial spaces. One can avoid this assumption by modifying the construction; see Section~8.}

Because \(H\) has interaction range \(m\), applying \(H\) to a vector supported in
\(B_L(x)\) may create nonzero components in the enlarged ball \(B_{L+m}(x)\), but
not beyond. In other words, the \emph{input} lives on \(B_L(x)\), while the
\emph{output} is contained in \(B_{L+m}(x)\). This leads to the following
\emph{(rectangular)} finite section:
\[
  H_{L,x}
  := P_{L+m,x}\,H\,P_{L,x}^*
  \;:\;
  \ell^2\bigl(B_L(x)\cap\Gamma\bigr)
  \longrightarrow
  \ell^2\bigl(B_{L+m}(x)\cap\Gamma\bigr).
\]
Equivalently, \(H_{L,x}\) is obtained from \(H\) by keeping exactly those matrix
entries \(H_{pq}\) for which \(q\in B_L(x)\cap\Gamma\) and
\(p\in B_{L+m}(x)\cap\Gamma\), and discarding all other rows/columns.

\paragraph{Local lower norms for finite sections.}
\label{par: local lower norms}
For $\lambda\in\C$ we write $H-\lambda := H-\lambda I$ and define the corresponding
rectangular shift on the window by
\[
  (H_{L,x}-\lambda)
  := P_{L+m,x}\,(H-\lambda)\,P_{L,x}^*
  \;=\; H_{L,x}-\lambda I_{L,x},
\]
where $I_{L,x}:\ell^2(B_L(x)\cap\Gamma)\to \ell^2(B_{L+m}(x)\cap\Gamma)$ denotes the
canonical embedding (extension by zero).

Since the lower norm is defined as infimum over all vectors $\psi \in \ell^2(\Gamma)$,
restricting to vectors supported in $B_L(x)\cap\Gamma$ can only increase it.
In particular, for every $x\in X$ and $\lambda\in\C$,
\begin{equation}\label{eq:local-lower-bound}
  \nu(H-\lambda)\;\le\;\nu\bigl(H_{L,x}-\lambda\bigr).
\end{equation}
Applying the same observation to $(H-\lambda)^* =H^*-\overline{\lambda}$ yields
\[
  \nu_{comb}(H-\lambda)
  :=\min\{\nu(H-\lambda),\,\nu(H^*-\overline{\lambda})\}
  \;\le\;
  \min\Bigl\{\nu\bigl(H_{L,x}-\lambda\bigr),\,
             \nu\bigl((H^*)_{L,x}-\overline{\lambda}\bigr)\Bigr\}.
\]
Taking the infimum over all window centres $x\in X$ preserves this inequality, hence
for every $\lambda\in\C$,
\[
  \nu_{comb}(H-\lambda)
  \;\le\;
  \inf_{x\in X}\min\Bigl\{\nu\bigl(H_{L,x}-\lambda\bigr),\,
                         \nu\bigl((H^*)_{L,x}-\overline{\lambda}\bigr)\Bigr\}. \footnote{Notice that $(H^*)_{L,x} \neq (H_{L,x})^*$. Hence the right-hand side cannot be rewritten directly in terms of the combined lower norm $\nu_{comb}(H-\lambda)$.}
\]
The main contribution of this paper is an explicit \emph{upper bound} on the local
lower norms in terms of the (global) lower norm: there exist constants $C_0, L_0>0$,
independent of $L$, such that
\[
  \inf_{x\in X}\nu\bigl(H_{L,x}-\lambda\bigr)
  \;\le\;
  \nu(H-\lambda)+\frac{C_0}{L},
  \qquad \lambda\in\C,\ L>L_0.
\]
Again, applying the same estimate to $H^*-\overline{\lambda}$ and taking the minimum yields
immediately the corresponding bound for the combined lower norm,
\[
  \inf_{x\in X}\min\Bigl\{\nu\bigl(H_{L,x}-\lambda\bigr),\,
                         \nu\bigl((H^*)_{L,x}-\overline{\lambda}\bigr)\Bigr\}
  \;\le\;
  \nu_{comb}(H-\lambda)+\frac{C_0}{L}. 
\]
In particular, the infimum of local lower norms approximates the global lower norm
from above with an explicit error of order $O(1/L)$.

\begin{remark}[From $\inf$ to $\min$]\label{rem:inf-min-finite}
For fixed $L>0$ and $\lambda\in\C$ one may interchange the order in which the
infimum over centres and the minimum over the two local lower norms are taken:
\begin{equation}\label{eq:inf-min-swap}
  \inf_{x\in X}\min\Bigl\{\nu(H_{L,x}-\lambda),\,\nu\bigl((H^*)_{L,x}-\overline{\lambda}\bigr)\Bigr\}
  \;=\;
  \min\Bigl\{\inf_{x\in X}\nu(H_{L,x}-\lambda),\,\inf_{x\in X}\nu\bigl((H^*)_{L,x}-\overline{\lambda}\bigr)\Bigr\}.
\end{equation}
\end{remark}

\paragraph{Intermezzo: finite local complexity.}
\label{intermezzo: flc}
Moreover, the infima in \eqref{eq:inf-min-swap} reduce to minima as soon as, for the given scale $L$, only finitely many finite sections $H_{L,x}$ (and $(H^*)_{L,x}$) occur, up to the relevant notion of equivalence. To make this precise, fix $L>0$ and, for each $x\in X$, consider the local pair
\[
  \bigl(B_{L+m}(x)\cap\Gamma,\; H_{L,x}\bigr).
\]
Two such pairs
\[
  \bigl(B_{L+m}(x)\cap\Gamma,\; H_{L,x}\bigr)
  \qquad\text{and}\qquad
  \bigl(B_{L+m}(y)\cap\Gamma,\; H_{L,y}\bigr)
\]
are called equivalent if there exists an isometry $\tau:X\to X$ from a prescribed class of isometries, for example, compositions of translations and rotations, such that
\[
  \tau\bigl(B_L(x)\cap\Gamma\bigr)=B_L(y)\cap\Gamma
  \qquad\text{and}\qquad
  \tau\bigl(B_{L+m}(x)\cap\Gamma\bigr)=B_{L+m}(y)\cap\Gamma,
\]
and such that the induced unitary operators
\[
  U_\tau:\ell^2(B_L(x)\cap\Gamma)\to \ell^2(B_L(y)\cap\Gamma),
  \qquad
  V_\tau:\ell^2(B_{L+m}(x)\cap\Gamma)\to \ell^2(B_{L+m}(y)\cap\Gamma)
\]
satisfy
\[
  H_{L,y}\,U_\tau=V_\tau\,H_{L,x}.
\]
If, for fixed $L$, only finitely many such equivalence classes occur, then the relevant infima reduce to minima over a finite list of representatives. If this property holds for every $L>0$, we say that $(\Gamma,H)$ has
\emph{finite local complexity} with respect to the chosen class of
isometries. This may be viewed as an operator-decorated version of the
usual notion of finite local complexity for Delone sets and aperiodic
structures; for background on the latter, see
\cite{BaakeGrimm2013}, and for its role in
spectral approximation from local patches, see
\cite{Hege2024Thesis,HegeMoscolariTeufel2026}.

\begin{remark}[Standing assumptions]
\label{rem:standing-assumptions}
From this point on, we work throughout under Assumption~\ref{ass:standing}
on $(X,d,\mu)$, with a countable, uniformly discrete, relatively dense set $\Gamma\subset X$,
and with finite-interaction-range operators $H:\ell^2(\Gamma)\to\ell^2(\Gamma)$ with constants
$m$, $M$, and $\gamma$ as described above.
\end{remark}

\subsection{Finite sections: two guiding examples}
\label{sec:finite-sections-examples}

The term \emph{finite section} comes from the classical situation of band
operators on $\ell^2(\mathbb{Z})$, where one literally cuts out a finite block (a
\emph{section}) of the infinite matrix.  In our setting the same idea persists:
one restricts the support to a finite set, while keeping precisely
those rows needed to record all interactions of a finite--range operator.
We record two concrete pictures (one matrix-based, one graph-based) that
illustrate this restriction procedure and match the localised operators used
later.

\begin{example}[Truncating an infinite band matrix on $\ell^2(\mathbb{Z})$]
Let $X=\mathbb{Z}$ with the usual metric, fix $x=0$, and consider a band operator
$H:\ell^2(\mathbb{Z})\to\ell^2(\mathbb{Z})$ of interaction range $m=1$.  In the standard basis
$(\delta_n)_{n\in\mathbb{Z}}$ the matrix of $H$ is tridiagonal:
\[
H \;=\;
\begin{pmatrix}
\ddots & \ddots & \ddots &        &        &        \\
       & a_{-2} & b_{-1} & c_{0}  &        &        \\
       &        & a_{-1} & b_{0}   & c_{1}  &        \\
       &        &        & a_{0}   & b_{1}  & c_{2}  \\
       &        &        &         & \ddots & \ddots \\
\end{pmatrix}
\]

\medskip
For $L\in\mathbb{N}$, the window is $B_{L}(0)=\{-L,\dots,L\}$ and the enlarged window is
$B_{L+m}(0)=\{-(L+m),\dots,L+m\}$.  The corresponding rectangular finite section
is
\[
  H_{L,0}
  \;:=\;
  P_{L+m,0}\,H\,P_{L,0}^*
  \;:\;
  \ell^2\!\bigl(B_L(0)\bigr)\longrightarrow \ell^2\!\bigl(B_{L+m}(0)\bigr).
\]
Equivalently, $H_{L,0}$ consists of the columns of $H$ indexed by
$q\in B_L(0)$, restricted to the rows indexed by $p\in B_{L+m}(0)$; i.e.\ it
keeps precisely those matrix entries $H_{pq}$ with
$q\in\{-L,\dots,L\}$ and $p\in\{-(L+m),\dots,L+m\}$.

\medskip
For a concrete picture, take $L=2$ (and still $m=1$). Then
$B_{2}(0)\cap\Gamma=\{-2,-1,0,1,2\}$ and $B_{3}(0)\cap\Gamma=\{-3,-2,-1,0,1,2,3\}$, so
$H_{2,0}:\ell^2(\{-2,\dots,2\})\to \ell^2(\{-3,\dots,3\})$ is the
$(7\times 5)$ matrix
\[
H_{2,0}=
\left(
\begin{array}{ccccc}
 c_{-2} & 0     & 0     & 0     & 0     \\
 b_{-2} & c_{-1}& 0     & 0     & 0     \\
 a_{-2} & b_{-1}& c_{0}& 0     & 0     \\
 0      & a_{-1}& b_{0} & c_{1} & 0     \\
 0      & 0     & a_{0} & b_{1} & c_{2} \\
 0      & 0     & 0     & a_{1} & b_{2} \\
 0      & 0     & 0     & 0     & a_{2}
\end{array}
\right),
\]
where the first and last row represent boundary output into
$B_{3}(0)\setminus B_{2}(0)$.
\end{example}

\begin{example}[Truncating a graph Schr\"odinger operator]
Let \(X\coloneqq \R^2\) be equipped with the Euclidean metric, and let \(\Gamma\subset X\) be the lattice generated by $f\coloneqq (1,0)$ and $g\coloneqq (1,1),$
that is,
\[
\Gamma=\{\, i f + j g : i,j\in\Z \,\}
      =\{(i+j,j): i,j\in\Z\}.
\]
We connect $p,q\in\Gamma$ by an undirected edge if $p-q \in \{f, -f, g, -g\}$,
so each vertex has degree $4$. On $\ell^2(\Gamma)$ we consider the discrete
Schr\"odinger operator
\[
  (H\psi)(p)=(\Delta\psi)(p)+V(p)\psi(p),
  \qquad
  (\Delta\psi)(p):=\sum_{q\sim p}\bigl(\psi(q)-\psi(p)\bigr),
\]
where $V:\Gamma\to\R$ is a bounded potential and $q\sim p$ denotes adjacency.

Fix a centre $x\in X$ and a radius $L>0$. The (rectangular) finite section
\[
  H_{L,x}=P_{L+m,x}\,H\,P_{L,x}^*
  :\ell^2(B_L(x)\cap\Gamma)\to\ell^2(B_{L+m}(x)\cap\Gamma)
\]
keeps all interactions emanating from the inner window $B_L(x)\cap\Gamma$ while
allowing the output to spill into the one-step enlarged window
$B_{L+m}(x)\cap\Gamma$ (here the interaction range is $m=\sqrt{2}$).

\medskip

\noindent
A schematic picture is shown below: black vertices form the input window
$B_L(x)\cap\Gamma$, while grey vertices represent the boundary layer
$(B_{L+\sqrt{2}}(x)\setminus B_L(x))\cap\Gamma$ needed to capture the output of $H$.

\begin{center}
\begin{tikzpicture}[
  scale=1.0,
  vertex/.style={circle,draw,inner sep=1.2pt},
  vin/.style={vertex,fill=black},
  vout/.style={vertex,fill=gray!35},
  every edge/.style={draw,gray!70,line width=0.6pt},
]
\foreach \i in {-2,-1,0,1,2}{
  \foreach \j in {-2,-1,0,1,2}{
    \coordinate (p\i\j) at ({\i+\j},{\j});
  }
}

\foreach \i in {-2,-1,0,1}{
  \foreach \j in {-2,-1,0,1,2}{
    \draw (p\i\j) -- (p\the\numexpr\i+1\relax\j);
  }
}
\foreach \i in {-2,-1,0,1,2}{
  \foreach \j in {-2,-1,0,1}{
    \draw (p\i\j) -- (p\i\the\numexpr\j+1\relax);
  }
}

\foreach \i in {-2,-1,0,1,2}{
  \foreach \j in {-2,-1,0,1,2}{
    \pgfmathtruncatemacro{\ai}{abs(\i)}
    \pgfmathtruncatemacro{\aj}{abs(\j)}
    \ifnum\ai<2
      \ifnum\aj<2
        \node[vin] at (p\i\j) {};
      \else
        \node[vout] at (p\i\j) {};
      \fi
    \else
      \node[vout] at (p\i\j) {};
    \fi
  }
}

\node[vertex,fill=white,draw=black,inner sep=1.3pt] (x) at (p00) {};
\node[below=3pt of x] {$x$};

\end{tikzpicture}
\end{center}

\noindent
\emph{Interpretation.}
If $\psi$ is supported on the black vertices (inner window), then $(H\psi)(p)$
can be nonzero on the black and also on the grey vertices, because the Laplacian couples to
nearest neighbours. The rectangular restriction $H_{L,x}=P_{L+\sqrt{2},x}HP_{L,x}^*$
captures exactly this effect: it restricts the input to the inner window while
keeping the minimal enlargement needed for the output.
\end{example}

\section{Tent functions and localisation}
\label{sec:tent}

To relate the global lower norm of an operator to lower norms on finite windows,
we need a localisation procedure on $\Gamma$ that interacts well with the
geometry and with the finite interaction range of $H$. A natural first idea is
to use \emph{hard cut-offs}, i.e.\ characteristic functions of metric balls.
However, sharp truncations typically create large boundary effects: commuting a
band operator with a hard cut-off produces a boundary term that does not admit a
useful decay in the window size.

Following the strategy used successfully in Euclidean settings (see, e.g.,
\cite{HegeMoscolariTeufel2026}), we therefore use \emph{Lipschitz} cut-offs, namely
tent functions. The key advantage is that tent profiles are uniformly Lipschitz,
so differences $w_{L,x}(p)-w_{L,x}(q)$ can be controlled by $d(p,q)$, which is
bounded by the interaction radius whenever $H_{pq}\neq 0$. This is the basic
mechanism that later yields commutator bounds of order $1/L$.

\medskip

Let $(X,d,\mu)$ be as in the standing framework (Assumption~\ref{ass:standing}).
Since \(X\) carries the topology induced by \(d\), distance functions are continuous and hence Borel measurable. This ensures that the tent functions
introduced below are measurable and can be integrated with respect to $\mu$.

\begin{definition}[Tent functions and localisation operators]
\label{def:tent}
For $L>0$ and $x\in X$ we define the \emph{tent function}
\[
  w_{L,x}(y)
  \;:=\;
  \max\Bigl\{0,\; 1-\frac{d(x,y)}{L}\Bigr\},
  \qquad y\in X.
\]
Restricting $w_{L,x}$ to $\Gamma\subset X$, we define the associated
multiplication operator $W_{L,x}$ on $\ell^2(\Gamma)$ by
\[
  (W_{L,x}\psi)(p) \;:=\; w_{L,x}(p)\,\psi(p),
  \qquad p\in\Gamma.
\]
\end{definition}

\noindent\textit{Measurability.}
For fixed $x\in X$, the map $y\mapsto d(x,y)$ is continuous (hence Borel
measurable), and $\rho\mapsto \max\{0,1-\rho/L\}$ is continuous on $\R$.
Therefore $w_{L,x}$ is a Borel function on $X$. In particular,
$\supp(w_{L,x})\subset B_L(x)$ is a Borel set, and integrals of $w_{L,x}$ and
$|w_{L,x}|^2$ with respect to $\mu$ are well-defined. We summarize:

\begin{lemma}[Measurability, support, and Lipschitz bound]
\label{lem:tent-regularity}
Let $(X,d,\mu)$ be a metric measure space such as in Assumption \ref{ass:standing}, $L>0$ and $x\in X$. It holds that:
\begin{enumerate}[leftmargin=2em]
\item[(i)] $w_{L,x}$ is Borel measurable on $X$ (hence $\mu$-measurable);
\item[(ii)] $\supp(w_{L,x})\subset B_L(x)$;
\item[(iii)] $w_{L,x}$ is $\tfrac1L$--Lipschitz, i.e.
\[
  |w_{L,x}(y)-w_{L,x}(z)|
  \;\le\;\frac{1}{L}\,d(y,z)
  \qquad (y,z\in X).
\]
\end{enumerate}
\end{lemma}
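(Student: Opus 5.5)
The plan is to factor $w_{L,x}=\phi_L\circ\rho_x$, where $\rho_x(y):=d(x,y)$ and $\phi_L(t):=\max\{0,1-t/L\}$ for $t\in\R$, and to derive each of the three properties from elementary facts about these two maps.

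For (i), we use that $X$ carries the topology induced by $d$. The reverse triangle inequality gives $|\rho_x(y)-\rho_x(z)|\le d(y,z)$, so $\rho_x$ is $1$-Lipschitz and in particular continuous. The map $\phi_L$ is a maximum of two continuous functions, hence continuous. Therefore $w_{L,x}$ is continuous, so it is Borel measurable. Since $\mu$ is a Radon measure defined on the Borel sets, this also gives $\mu$-measurability.

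For (ii), if $y\notin B_L(x)$ then $d(x,y)>L$, so $1-d(x,y)/L<0$ and $w_{L,x}(y)=0$. This gives $\{w_{L,x}\neq0\}\subset B_L(x)$. If $\supp$ is read as the closure of the nonvanishing set, we note that the closed ball $B_L(x)$ is closed, because it is the preimage of $(-\infty,L]$ under the continuous map $\rho_x$. Hence the closure is still contained in $B_L(x)$.

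For (iii), we first check that $\phi_L$ is $\tfrac1L$-Lipschitz on $\R$. The function $t\mapsto 1-t/L$ has Lipschitz constant $1/L$, and the elementary inequality $|\max\{0,a\}-\max\{0,b\}|\le|a-b|$ preserves this constant. Composing with the $1$-Lipschitz map $\rho_x$ then gives
\[
|w_{L,x}(y)-w_{L,x}(z)|\le \tfrac1L|d(x,y)-d(x,z)|\le \tfrac1L d(y,z),
\]
which is the claimed bound.

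None of these steps presents a genuine obstacle. The only points needing care are the interpretation of $\supp$ in (ii), which is handled by the closedness of closed balls, and the inequality for $\max\{0,\cdot\}$ in (iii). That inequality follows from a short case distinction on the signs of $a$ and $b$.
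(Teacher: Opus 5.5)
Your proof is correct and follows the paper's route. The paper likewise obtains Borel measurability by composing the continuous map $y\mapsto d(x,y)$ with the continuous profile $\rho\mapsto\max\{0,1-\rho/L\}$, and leaves (ii) and (iii) essentially implicit; your arguments for those parts (closedness of the closed ball, and the $1$-Lipschitz map $\max\{0,\cdot\}$ composed with the $1$-Lipschitz distance function) are exactly the standard verifications the paper omits.
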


\medskip

The doubling property of the volume function $V(r)=\mu(B_r(e))$ will later be
used to relate $\|w_{L,e}\|_{L^2(X)}^2$ to $V(L+m)$.

\section{A commutator bound from doubling geometry}

The commutator $[W_{L,x},H] \coloneqq W_{L,x}H-HW_{L,x}$ captures the localisation error when pulling $H$ through the cutoff. Its $L^2$ norm is controlled by the Lipschitz constant of $w_{L,x}$ and by the geometry through the volume growth.

The following estimate shows that this localisation error is small in an averaged
sense and decays at rate $O(1/L)$, with constants determined by the geometry and
the interaction range.

\begin{theorem}[Commutator bound with tent weights]\label{thm:commutator}
Let $(X,d,\mu)$, $\Gamma \subset X$ and $H$ satisfy the settings of Assumptions~\ref{ass:standing}
and~\ref{ass:operators}.
For $L>0$ and $x\in X$ define $W_{L,x}$ as above. Then for all $\psi\in\ellTwo(\Gamma)$,
\begin{equation}\label{eq:commutator}
  \int_X \norm{[W_{L,x},H]\psi}_{\ellTwo(\Gamma)}^2\,d\mu(x)
  \;\le\;
  \frac{\gamma^2\,C_{\mathrm{geom}}\,m^2M^2}{L^2}\,
  \norm{\psi}_{\ellTwo(\Gamma)}^2\,\norm{w_{L,e}}_{L^2(X)}^2,
\end{equation}
where
\[
  C_{\mathrm{geom}}\coloneqq 4\,C_{\mathrm{dbl}}^{\,k},
  \qquad
  k=\Bigl\lceil\log_2\bigl(2(1+m/L)\bigr)\Bigr\rceil.
\]
In particular, for $L\ge m$, one may bound $C_{\mathrm{geom}}\le 4C_{\mathrm{dbl}}^2$ uniformly in $L$.
\end{theorem}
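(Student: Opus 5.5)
We compute the commutator entrywise and then average over centres. For $p,q\in\Gamma$ the commutator has matrix entries
\[
  ([W_{L,x},H])_{pq}=\bigl(w_{L,x}(p)-w_{L,x}(q)\bigr)H_{pq}.
\]
These entries vanish unless $p\neq q$, $d(p,q)\le m$ and $H_{pq}\neq0$. By Lemma~\ref{lem:tent-regularity}(iii) the nonzero entries satisfy $|([W_{L,x},H])_{pq}|\le mM/L$.

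There is a second, crucial observation. The difference $w_{L,x}(p)-w_{L,x}(q)$ vanishes unless at least one of $p,q$ lies in $B_L(x)$. Since $d(p,q)\le m$, both points then lie in $B_{L+m}(x)$. Hence the $p$-th component is nonzero only if $x\in B_{L+m}(p)$, and we get
\[
  |([W_{L,x},H]\psi)(p)|\le \frac{mM}{L}\,\mathbf 1_{B_{L+m}(p)}(x)\sum_{q\neq p,\,H_{pq}\neq0}|\psi(q)|.
\]

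Next we square and sum over $p$. Cauchy--Schwarz over the at most $\gamma$ neighbours gives
\[
  \Bigl(\sum_q|\psi(q)|\Bigr)^2\le\gamma\sum_q|\psi(q)|^2.
\]
We then integrate in $x$ and use Fubini/Tonelli; all sums have nonnegative terms, and measurability is ensured by Lemma~\ref{lem:tent-regularity}(i). This yields
\[
  \int_X\|[W_{L,x},H]\psi\|^2\,d\mu(x)\le\frac{\gamma m^2M^2}{L^2}\sum_p\sum_{q\sim p}|\psi(q)|^2\,\mu\bigl(B_{L+m}(p)\bigr).
\]
By left-invariance, $\mu(B_{L+m}(p))=V(L+m)$. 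Exchanging the order of summation, each $q$ occurs for at most $\gamma$ indices $p$. Thus the right-hand side is at most
\[
  \frac{\gamma^2m^2M^2}{L^2}\,V(L+m)\,\|\psi\|^2.
\]
This already has the right structure, with the constant $\gamma^2$ as stated.

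It remains to compare $V(L+m)$ with $\|w_{L,e}\|_{L^2(X)}^2$, which is the main geometric step. We need a lower bound for $\|w_{L,e}\|_{L^2}^2$. On $B_{L/2}(e)$ we have $w_{L,e}\ge 1/2$, so
\[
  \|w_{L,e}\|_{L^2}^2\ge \tfrac14\,V(L/2).
\]
Doubling applied $k$ times gives $V(2^k\cdot L/2)\le C_{\mathrm{dbl}}^kV(L/2)$. Choosing $k=\lceil\log_2(2(1+m/L))\rceil$ ensures $2^kL/2\ge L+m$, and monotonicity of $V$ then gives
\[
  V(L+m)\le C_{\mathrm{dbl}}^kV(L/2)\le 4C_{\mathrm{dbl}}^k\,\|w_{L,e}\|_{L^2}^2=C_{\mathrm{geom}}\|w_{L,e}\|_{L^2}^2.
\]
This proves \eqref{eq:commutator}.

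For the final claim, if $L\ge m$ then $2(1+m/L)\le4$, so $k\le2$ and $C_{\mathrm{geom}}\le 4C_{\mathrm{dbl}}^2$.

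The one point to double-check is the support claim that the commutator component at $p$ vanishes unless $d(x,p)\le L+m$. If $p\notin B_L(x)$ but $q\in B_L(x)$, then $d(x,p)\le L+m$, which is covered. If $p\in B_L(x)$, the bound holds trivially.
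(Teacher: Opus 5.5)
Your proof is correct and follows essentially the same route as the paper. Both arguments use the Lipschitz bound $mM/L$ on the entries, the support restriction to $B_{L+m}(x)$, a factor $\gamma$ from Cauchy--Schwarz and another from the summation swap, integration producing $V(L+m)$, and the comparison $V(L+m)\le C_{\mathrm{dbl}}^k V(L/2)\le 4C_{\mathrm{dbl}}^k\|w_{L,e}\|_{L^2(X)}^2$. The only difference is cosmetic: you attach the indicator $\mathbf 1_{B_{L+m}(p)}(x)$ to the output index $p$ and integrate before swapping, whereas the paper restricts the input index to $q\in B_{L+m}(x)$ and swaps sums first.
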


\begin{proof}
Let $\psi \in \ell^2(\Gamma)$ be fixed. The squared norm of the commutator is
\begin{align*}
  R(x)
  \,\coloneqq &\ \bigl\|[W_{L,x}, H] \psi\bigr\|_{\ell^2(\Gamma)}^2
       =  \sum_{p \in \Gamma} \left| \sum_{q \in \Gamma}
        \bigl(w_{L,x}(p) - w_{L,x}(q)\bigr) H_{pq} \psi(q)\right|^2.
\end{align*}
Since $H$ has finite interaction range $m$, the inner sum runs only over $q$ with $d(p,q) \le m$. Moreover, the term vanishes unless one of $p,q$ lies in $B_L(x)$. Since $d(p,q)\le m$, this implies that non-zero terms only occur if $q \in B_{L+m}(x)$.
Using the Lipschitz property $|w_{L,x}(p)-w_{L,x}(q)| \le d(p,q)/L \le m/L$ and the bound $|H_{pq}|\le M$, we estimate
\[
  R(x) \;\le\; \sum_{p \in \Gamma} \left( \sum_{q \sim p} \frac{mM}{L} |\psi(q)| \right)^2 \;=\; \frac{m^2M^2}{L^2}\sum_{p \in \Gamma} \left( \sum_{q \sim p} |\psi(q)| \right)^2,
\]
where the inner sum is over neighbours $q$ of $p$ (and $q\sim p \vcentcolon\iff H_{pq}\ne 0$ or $H_{qp}\ne 0$) restricted to $q \in B_{L+m}(x)$.
By Cauchy--Schwarz on the inner sum (which has at most $\gamma$ terms),
\[
  \left( \sum_{q \sim p} |\psi(q)| \right)^2
  \;\le\;
  \left(\sum_{q \sim p} 1^2\right) \left(\sum_{q \sim p} |\psi(q)|^2\right)
  \;\le\;
  \gamma \sum_{q \sim p} |\psi(q)|^2.
\]
Inserting this into the expression for $R(x)$ gives
\[
  R(x)
  \;\le\; \gamma\, \frac{m^2 M^2}{L^2} \sum_{p \in \Gamma} \sum_{\substack{q \sim p \\ q \in B_{L+m}(x)}} |\psi(q)|^2.
\]
We now swap the summation order. The condition $p \in \Gamma$ and $q \sim p$ is equivalent to $q \in \Gamma$ and $p \sim q$. For a fixed $q$, there are at most $\gamma$ such $p$'s. Thus
\[
  R(x)
  \;\le\; \frac{\gamma^2 m^2 M^2}{L^2} \sum_{q \in B_{L+m}(x)\cap \Gamma} |\psi(q)|^2
  \;=\; \frac{\gamma^2 m^2 M^2}{L^2} \sum_{q \in \Gamma} |\psi(q)|^2 \mathbf{1}_{B_{L+m}(q)}(x),
\]
where we used the symmetry $q \in B_{L+m}(x) \iff x \in B_{L+m}(q)$.
Integrating over $X$ yields
\begin{align*}
  \int_X R(x)\,d\mu(x)
  &\;\le\; \frac{\gamma^2 m^2 M^2}{L^2} \sum_{q \in \Gamma} |\psi(q)|^2 \int_X \mathbf{1}_{B_{L+m}(q)}(x)\,d\mu(x) \\
  &\;=\; \frac{\gamma^2 m^2 M^2}{L^2} \|\psi\|_{\ell^2(\Gamma)}^2 \, V(L+m).
\end{align*}
Finally, we relate $V(L+m)$ to the $L^2$-norm of the tent function. We establish a lower bound by restricting the integration to the ball $B_{L/2}(e)$. For any point $x \in B_{L/2}(e)$, we have $d(x,e) \leq L/2$, which implies
\[
  w_{L,e}(x) \;=\; 1 - \frac{d(x,e)}{L} \;\geq \; 1 - \frac{1}{2} \;=\; \frac{1}{2}.
\]
Squaring and integrating yields
\[
  \|w_{L,e}\|_{L^2(X)}^2
  \;=\; \int_X |w_{L,e}(x)|^2\,d\mu(x)
  \;\ge\; \int_{B_{L/2}(e)} \left(\frac{1}{2}\right)^2\,d\mu(x)
  \;=\; \frac{1}{4} V(L/2).
\]
Using the doubling property iteratively with $k=\lceil\log_2(2(1+m/L))\rceil$, we have $L+m \le 2^k (L/2)$ and thus
\[
  V(L+m) \;\le\; C_{\mathrm{dbl}}^k V(L/2) \;\le\; 4 C_{\mathrm{dbl}}^k \|w_{L,e}\|_{L^2(X)}^2.
\]
Defining $C_{\mathrm{geom}} \coloneqq 4 C_{\mathrm{dbl}}^k$ completes the proof.
\end{proof}

This estimate is the key analytic input in the localisation argument: it
quantifies how well the operator $H$ interacts with localisation at scale $L$,
and will ultimately control the error in passing from global to local
quantities. We now turn to the mechanism that converts this control into
local quasi-modes.

From now on, we restrict to
\[
    L > L_0:=\max\{m,R\},
\]
where $R$ denotes the covering radius of $\Gamma$. For such $L$, the
geometric constant $C_{\mathrm{geom}}$ in Theorem~\ref{thm:commutator}
can be chosen uniformly in $L$, while the
local windows $B_L(x)\cap\Gamma$ are nonempty for all $x\in X$. Consequently,
all subsequent localisation errors are of the form $C_0/L$ with a fixed
constant $C_0$.

\section{From global quasi-modes to local quasi-modes}

We now use the commutator bound to relate global quasi-modes to local ones.
The guiding idea is that quasi-modes (approximate eigenvectors) can be localised without
significant loss, provided $L$ is sufficiently large.

\begin{lemma}[Averaging identity for tent localisations]\label{lem:avg-W}
Let $(X,d,\mu)$ and $\Gamma \subset X$ satisfy the settings of Assumptions~\ref{ass:standing}
and~\ref{ass:operators}, and let $W_{L,x}$ be the multiplication operator on $\ell^2(\Gamma)$
associated with the tent function $w_{L,x}$.
Then, for every $\phi\in\ell^2(\Gamma)$,
\begin{equation}\label{eq:mowing-lawn}
 \int_X \bigl\|W_{L,x}\phi\bigr\|_{\ell^2(\Gamma)}^2\,\mathrm{d}\mu(x)
 \;=\;
 \|\phi\|_{\ell^2(\Gamma)}^2\,\|w_{L,e}\|_{L^2(X)}^2 .
\end{equation}
\end{lemma}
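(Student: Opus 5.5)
The identity is a Tonelli computation followed by a translation argument. First, expand the left-hand side pointwise in $x$:
\[
  \|W_{L,x}\phi\|_{\ell^2(\Gamma)}^2=\sum_{p\in\Gamma}|w_{L,x}(p)|^2\,|\phi(p)|^2 .
\]
All terms are nonnegative. For each fixed $p$, the map $x\mapsto w_{L,x}(p)=\max\{0,1-d(x,p)/L\}$ is continuous, hence Borel; this is Lemma~\ref{lem:tent-regularity}(i) together with the symmetry $d(x,p)=d(p,x)$. Since $\Gamma$ is countable, Tonelli's theorem (equivalently, monotone convergence for the countable sum) lets us interchange sum and integral:
\[
  \int_X\|W_{L,x}\phi\|^2\,d\mu(x)=\sum_{p\in\Gamma}|\phi(p)|^2\int_X w_{L,p}(x)^2\,d\mu(x),
\]
where we used $w_{L,x}(p)=w_{L,p}(x)$.

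The key step is to show that $\int_X w_{L,p}(x)^2\,d\mu(x)=\|w_{L,e}\|_{L^2(X)}^2$ for every $p\in X$. By left-invariance of $d$,
\[
  d(p,x)=d(e,p^{-1}x),
  \qquad\text{so}\qquad
  w_{L,p}(x)=w_{L,e}(p^{-1}x).
\]
It therefore suffices to know that $\int_X f(p^{-1}x)\,d\mu(x)=\int_X f\,d\mu$ for nonnegative Borel $f$. For indicators $f=\mathbf 1_E$ this reads $\mu(pE)=\mu(E)$, which is exactly the left-invariance of $\mu$ assumed in Assumption~\ref{ass:standing}. The general case follows by passing to simple functions and then using monotone convergence. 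Alternatively, one can use the layer-cake formula: $\{x: w_{L,p}(x)^2>t\}$ is a ball $B_s(p)$ (open or closed), so its measure equals $\mu(B_s(e))$ by invariance. With this, the sum collapses to $\|\phi\|^2\,\|w_{L,e}\|_{L^2(X)}^2$.

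The only point needing care is the measurability and finiteness bookkeeping. The integrand $w_{L,e}^2$ is bounded by $1$ and supported in $B_L(e)$, which has finite measure $V(L)$, so the integral is finite. Tonelli then applies without integrability hypotheses, and the final value is finite since $\phi\in\ell^2(\Gamma)$.
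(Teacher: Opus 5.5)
Your proposal is correct and follows the paper's proof: you expand the norm, interchange sum and integral by Tonelli, write $w_{L,x}(p)=w_{L,e}(p^{-1}x)$ via left-invariance of $d$, and substitute $y=p^{-1}x$ using left-invariance of $\mu$. Your extra justification of the substitution step (via indicators, simple functions and monotone convergence, or via layer-cake) fills in what the paper leaves implicit and is sound; the relevant superlevel sets are open balls.
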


\begin{proof}
We have
\[
 \|W_{L,x}\phi\|_{\ell^2(\Gamma)}^2
 = \sum_{p\in\Gamma} |w_{L,x}(p)|^2\,|\phi(p)|^2.
\]
Since all terms are nonnegative, Tonelli's theorem yields
\[
 \int_{X}\|W_{L,x}\phi\|_{\ell^2(\Gamma)}^2\,\mathrm{d}\mu(x)
 = \sum_{p\in\Gamma}|\phi(p)|^2
   \int_{X}|w_{L,x}(p)|^2\,\mathrm{d}\mu(x).
\]
By left-invariance of the metric,
$ w_{L,x}(p)=w_{L,e}(p^{-1}x). $
Hence, using the substitution $y=p^{-1}x$ and the left-invariance of $\mu$,
\[
  \int_X |w_{L,x}(p)|^2\,\mathrm{d}\mu(x)
  =
  \int_X |w_{L,e}(y)|^2\,\mathrm{d}\mu(y)
  =
  \|w_{L,e}\|_{L^2(X)}^2.
\] Substituting this into the previous identity gives
\eqref{eq:mowing-lawn}.
\end{proof}

Combining the averaging identity with the commutator estimate, we obtain a
localisation principle: global quasi-modes can be converted into local
quasi-modes with only a controlled loss.

\begin{lemma}\label{lem:localise}
Let $(X,d,\mu)$, $\Gamma \subset X$ and $H$ satisfy the settings of Assumptions~\ref{ass:standing}
and~\ref{ass:operators}, and let $\lambda\in\C$.
If $\psi\in\ellTwo(\Gamma)\setminus\{0\}$ satisfies
\[
  \norm{(H-\lambda)\psi}_{\ell^2(\Gamma)}\le \varepsilon\,\norm{\psi}_{\ell^2(\Gamma)}
\]
for some $\varepsilon>0$, then there exists $x\in X$ with $W_{L,x}\psi\neq 0$ such that
\[
  \norm{(H-\lambda)W_{L,x}\psi}_{\ell^2(\Gamma)}
  \;\le\;
  \Bigl(\varepsilon+\frac{C_0}{L}\Bigr)\,\norm{W_{L,x}\psi}_{\ell^2(\Gamma)},
  \qquad
  C_0\coloneqq \gamma mM\sqrt{C_{\mathrm{geom}}}.
\]
Consequently, for some $x\in X$,
\[
  \nuop(H_{L,x}-\lambda)\le \varepsilon+\frac{C_0}{L}.
\]
\end{lemma}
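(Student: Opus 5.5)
The plan is a standard averaging (``mowing the lawn'') argument combining Lemma~\ref{lem:avg-W} with Theorem~\ref{thm:commutator}. Write $A:=H-\lambda$. For each $x\in X$, since $W_{L,x}$ commutes with $\lambda I$ we have $[W_{L,x},A]=[W_{L,x},H]$, hence
\[
  A W_{L,x}\psi \;=\; W_{L,x}A\psi \;-\; [W_{L,x},H]\psi .
\]
By the triangle inequality and $(a+b)^2\le(1+t)a^2+(1+t^{-1})b^2$, or more cleanly via Minkowski in $L^2(X,\mu;\ell^2(\Gamma))$, we get
\[
  \Bigl(\int_X \|AW_{L,x}\psi\|^2\,d\mu(x)\Bigr)^{1/2}
  \le \Bigl(\int_X \|W_{L,x}A\psi\|^2\,d\mu\Bigr)^{1/2}
     +\Bigl(\int_X \|[W_{L,x},H]\psi\|^2\,d\mu\Bigr)^{1/2}.
\]
The first term equals $\|A\psi\|\,\|w_{L,e}\|_{L^2}\le \varepsilon\|\psi\|\,\|w_{L,e}\|_{L^2}$ by Lemma~\ref{lem:avg-W} applied to $\phi=A\psi$. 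The second is at most $\frac{C_0}{L}\|\psi\|\,\|w_{L,e}\|_{L^2}$ by Theorem~\ref{thm:commutator}, with $C_0=\gamma mM\sqrt{C_{\mathrm{geom}}}$, using $L>L_0\ge m$ so that $C_{\mathrm{geom}}$ is uniform. Lemma~\ref{lem:avg-W} with $\phi=\psi$ rewrites $\|\psi\|^2\|w_{L,e}\|^2_{L^2}=\int_X\|W_{L,x}\psi\|^2\,d\mu$. Setting $c:=\varepsilon+C_0/L$, we obtain
\[
  \int_X \Bigl(\|AW_{L,x}\psi\|^2 - c^2\|W_{L,x}\psi\|^2\Bigr)\,d\mu(x)\;\le\;0 .
\]

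Next we extract a good centre. Note that $0<\|w_{L,e}\|_{L^2}^2<\infty$, since it lies between $V(L/2)/4$ and $V(L)$. So the set $E:=\{x: W_{L,x}\psi\neq 0\}$ has positive measure: the integral of $\|W_{L,x}\psi\|^2$ is positive and the integrand vanishes off $E$. Suppose, for contradiction, that $\|AW_{L,x}\psi\|>c\|W_{L,x}\psi\|$ for every $x\in E$. Then the integrand above is strictly positive on $E$ and zero (both terms vanish) off $E$, so the integral is strictly positive, a contradiction. Hence some $x\in E$ satisfies the claimed inequality. Measurability of $x\mapsto\|W_{L,x}\phi\|^2$ and of the commutator norm is implicit in the earlier results (Tonelli in Lemma~\ref{lem:avg-W}); I would note this briefly, since each is a countable sum of continuous functions of $x$.

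For the consequence, observe that $\supp W_{L,x}\psi\subset B_L(x)\cap\Gamma$ by Lemma~\ref{lem:tent-regularity}(ii), and $H$ has range $m$. Hence $AW_{L,x}\psi$ is supported in $B_{L+m}(x)$, and equals $(H_{L,x}-\lambda)$ applied to the restriction $\phi:=P_{L,x}W_{L,x}\psi\neq0$, with equal norms. Thus $\nu(H_{L,x}-\lambda)\le\|(H_{L,x}-\lambda)\phi\|/\|\phi\|\le c$.

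The main obstacle is the pigeonhole step: one must ensure that the chosen $x$ has $W_{L,x}\psi\neq0$ and handle the strict/non-strict inequality correctly. The contradiction argument on the positive-measure set $E$ does this. Everything else is a direct insertion of Theorem~\ref{thm:commutator} and Lemma~\ref{lem:avg-W}.
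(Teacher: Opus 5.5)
Your proposal is correct and follows essentially the same route as the paper. The paper also applies Minkowski in $L^2(X,\mu;\ell^2(\Gamma))$ to $(H-\lambda)W_{L,x}\psi = W_{L,x}(H-\lambda)\psi - [W_{L,x},H]\psi$, bounds the two terms by the averaging identity and the commutator estimate, extracts a good centre by contradiction on the positive-measure set $E$, and concludes via the support of $W_{L,x}\psi$ in $B_L(x)\cap\Gamma$. Your remarks on measurability and on $0<\|w_{L,e}\|_{L^2(X)}<\infty$ fill in details the paper leaves implicit.
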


\begin{proof}
Write for each $x\in X$
\[
(H-\lambda )\,W_{L,x}\psi
\;=\; W_{L,x}(H-\lambda )\psi \;-\; [W_{L,x},H]\psi .
\]
Taking the $L^2(X,\mu)$–norm with respect to $x$ and using the triangle inequality
in $L^2$, we obtain
\begin{align*}
 \Bigl(\int_{X} \bigl\|(H - \lambda ) W_{L,x} \psi\bigr\|_{\ell^2(\Gamma)}^2
       \,\mathrm{d}\mu(x)\Bigr)^{1/2}
 &\leq  \Bigl(\int_{X} \bigl\|W_{L,x} (H - \lambda) \psi\bigr\|_{\ell^2(\Gamma)}^2\,
              \mathrm{d}\mu(x)\Bigr)^{1/2} \\
 &\qquad + \Bigl(\int_{X} \bigl\|[W_{L,x}, H] \psi\bigr\|_{\ell^2(\Gamma)}^2\,
                  \mathrm{d}\mu(x)\Bigr)^{1/2}.
\end{align*}

For the first term we apply \eqref{eq:mowing-lawn} with $\phi=(H-\lambda )\psi$:
\begin{align*}
\Bigl(\int_{X} \bigl\|W_{L,x} (H - \lambda ) \psi\bigr\|_{\ell^2(\Gamma)}^2\,\mathrm{d}\mu(x)\Bigr)^{1/2}
&= \|w_{L,e}\|_{L^2(X)} \,\bigl\|(H - \lambda ) \psi\bigr\|_{\ell^2(\Gamma)} \\
&\leq \|w_{L,e}\|_{L^2(X)}\, \varepsilon\, \|\psi\|_{\ell^2(\Gamma)}.
\end{align*}

For the second term we use the commutator estimate from
Theorem~\ref{thm:commutator} (with $\phi=\psi$), 
so
\[
\Bigl(\int_{X} \bigl\|[W_{L,x}, H] \psi\bigr\|_{\ell^2(\Gamma)}^{2}\,\mathrm{d}\mu(x)\Bigr)^{1/2}
\;\le\;
\frac{C_0}{L}\,\|w_{L,e}\|_{L^{2}(X)}\,\|\psi\|_{\ell^2(\Gamma)}.
\]

Combining these bounds, we arrive at
\begin{align*}
\Bigl(\int_{X} \bigl\|(H - \lambda ) W_{L,x} \psi\bigr\|_{\ell^2(\Gamma)}^2\,\mathrm{d}\mu(x)\Bigr)^{1/2}
&\leq \|w_{L,e}\|_{L^2(X)}
      \Bigl(\varepsilon + \frac{C_0}{L}\Bigr)\,\|\psi\|_{\ell^2(\Gamma)}.
\end{align*}

Applying \eqref{eq:mowing-lawn} with $\phi=\psi$ the other way around gives
\[
 \|w_{L,e}\|_{L^2(X)}^2\,\|\psi\|_{\ell^2(\Gamma)}^2 = \int_{X} \bigl\|W_{L,x}\psi\bigr\|_{\ell^2(\Gamma)}^2\,\mathrm{d}\mu(x).
\]
Hence
\begin{equation}
\label{eq:measure argument needed}
\int_{X} \bigl\|(H - \lambda ) W_{L,x} \psi\bigr\|_{\ell^2(\Gamma)}^2\,\mathrm{d}\mu(x)
 \;\leq\;
\int_{X} \Bigl(\varepsilon + \frac{C_0}{L}\Bigr)^{2}
                 \bigl\|W_{L,x}\psi\bigr\|_{\ell^2(\Gamma)}^2\,\mathrm{d}\mu(x).
\end{equation}

By Lemma \eqref{eq:mowing-lawn} with $\|\phi\|_{\ell^2(\Gamma)}^2\,\|w_{L,e}\|_{L^2(X)}^2 > 0$, the set
\[
  E\coloneqq \Bigl\{x\in X:\bigl\|W_{L,x}\psi\bigr\|_{\ell^2(\Gamma)}>0\Bigr\}
\]
has positive $\mu$--measure. Assume for contradiction that
\[
  \bigl\|(H - \lambda ) W_{L,x} \psi\bigr\|_{\ell^2(\Gamma)}^2
  \;>\;
  \Bigl(\varepsilon + \frac{C_0}{L}\Bigr)^{2}\,\bigl\|W_{L,x}\psi\bigr\|_{\ell^2(\Gamma)}^2
  \qquad \text{for all }x\in E .
\]
Since $\bigl\|W_{L,x}\psi\bigr\|_{\ell^2(\Gamma)}=0$ implies $W_{L,x}\psi=0$ and hence
$\bigl\|(H-\lambda)W_{L,x}\psi\bigr\|_{\ell^2(\Gamma)}=0$, the strict inequality above yields
\[
\int_{X} \bigl\|(H - \lambda ) W_{L,x} \psi\bigr\|_{\ell^2(\Gamma)}^2\,\mathrm{d}\mu(x)
 \;>\;
\int_{X} \Bigl(\varepsilon + \frac{C_0}{L}\Bigr)^{2}
                 \bigl\|W_{L,x}\psi\bigr\|_{\ell^2(\Gamma)}^2\,\mathrm{d}\mu(x),
\]
contradicting (\ref{eq:measure argument needed}). Therefore there exists
$x\in X$ with $\|W_{L,x}\psi\|>0$ such that
\[
  \|(H-\lambda)W_{L,x}\psi\|_{\ell^2(\Gamma)}
  \ \le\
  \Bigl( \varepsilon + \frac{C_0}{L} \Bigr)\,\|W_{L,x}\psi\|_{\ell^2(\Gamma)}.
\]

Set $\widetilde{\psi}\coloneqq W_{L,x}\psi\neq 0$. Since $w_{L,x}$ is supported in
$B_L(x)$, we have $\supp(\widetilde{\psi})\subset B_L(x)\cap\Gamma$. By the
definition of the localised operator, $(H_{L,x}-\lambda)$ agrees with
$(H-\lambda)$ on vectors supported in $B_L(x)\cap\Gamma$, and hence
\[
  \|(H_{L,x}-\lambda)\widetilde{\psi}\|_{\ell^2(\Gamma)}
  = \|(H-\lambda)\widetilde{\psi}\|_{\ell^2(\Gamma)}
  \le \Bigl(\varepsilon+\frac{C_0}{L}\Bigr)\,\|\widetilde{\psi}\|_{\ell^2(\Gamma)}.
\]
Taking the infimum over all nonzero vectors supported in $B_L(x)\cap\Gamma$
yields
\[
  \nu\bigl(H_{L,x}-\lambda\bigr)\le \varepsilon+\frac{C_0}{L}.
\] 
\end{proof}

As an immediate consequence, the global lower norm can be controlled from above
by local lower norms.

\begin{corollary}[Global lower norm controls window lower norms]\label{cor:global-local}
Let $(X,d,\mu)$, $\Gamma \subset X$ and $H$ satisfy the settings of Assumptions~\ref{ass:standing}
and~\ref{ass:operators}, then for every $\lambda\in\C$ and $L>L_0$,
\[
  \inf_{x\in X}\nuop(H_{L,x}-\lambda)
  \;\le\;
  \nuop(H-\lambda)+\frac{C_0}{L}, \qquad
  C_0=\gamma mM\sqrt{C_{\mathrm{geom}}}.
\]
\end{corollary}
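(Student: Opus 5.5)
The corollary follows by applying Lemma~\ref{lem:localise} to near-optimal global quasi-modes and then letting the slack tend to zero. Fix $\lambda\in\C$ and $L>L_0$, and write $\nu_0:=\nuop(H-\lambda)\ge 0$. For every $\delta>0$ the definition of the lower norm as an infimum gives a vector $\psi_\delta\in\ellTwo(\Gamma)$ with $\|\psi_\delta\|=1$ and
\[
\|(H-\lambda)\psi_\delta\|\le \nu_0+\delta.
\]
Thus $\psi_\delta$ is a global $\varepsilon$-quasi-mode with $\varepsilon:=\nu_0+\delta>0$. Strict positivity of $\varepsilon$ matters here, because Lemma~\ref{lem:localise} is stated for $\varepsilon>0$, and it holds even when $\nu_0=0$.

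Next I apply Lemma~\ref{lem:localise} with this $\varepsilon$. It produces a centre $x_\delta\in X$ such that
\[
\nuop(H_{L,x_\delta}-\lambda)\le \nu_0+\delta+\frac{C_0}{L},\qquad C_0=\gamma mM\sqrt{C_{\mathrm{geom}}}.
\]
Taking the infimum over centres then gives $\inf_{x\in X}\nuop(H_{L,x}-\lambda)\le \nu_0+\delta+C_0/L$. Since $\delta>0$ was arbitrary, letting $\delta\downarrow 0$ yields the claimed inequality.

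The only point needing care is that $C_0$ be independent of $L$. The restriction $L>L_0\ge m$ handles this: it gives $1+m/L\le 2$, hence $k\le 2$ in Theorem~\ref{thm:commutator}, so $C_{\mathrm{geom}}$ may be taken as the fixed constant $4C_{\mathrm{dbl}}^2$. One should check that Lemma~\ref{lem:localise} remains valid with this uniform $C_{\mathrm{geom}}$. It does, because the commutator bound \eqref{eq:commutator} only becomes weaker when $C_{\mathrm{geom}}$ is enlarged.

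The restriction $L>L_0\ge R$ also guarantees that every window $B_L(x)\cap\Gamma$ is nonempty. The local lower norms are therefore well defined for every $x$, although the $x_\delta$ supplied by the lemma already carries a nonzero localised vector. I expect no real obstacle: all the analytic work sits in Lemma~\ref{lem:localise}, and the corollary only needs the approximation argument in $\delta$ together with the uniformity of $C_{\mathrm{geom}}$ for $L\ge m$.
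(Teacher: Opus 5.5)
Your proposal is correct and follows the paper's proof essentially verbatim. You take a global quasi-mode with $\varepsilon=\nu(H-\lambda)+\delta>0$, apply Lemma~\ref{lem:localise}, take the infimum over centres and let $\delta\downarrow 0$. Your remark that $L>L_0\ge m$ forces $k\le 2$, so that $C_{\mathrm{geom}}=4C_{\mathrm{dbl}}^2$ can be fixed independently of $L$, matches the paper's discussion immediately after Theorem~\ref{thm:commutator}.
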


\begin{proof}
Fix $\lambda\in\C$ and $L>L_0$.  Let $\varepsilon>\nuop(H-\lambda)$.  By the definition
of the lower norm, there exists $\psi\in\ell^2(\Gamma)$ such that
\[
  \|(H-\lambda)\psi\|_{\ell^2(\Gamma)} < \varepsilon \, \|\psi\|_{\ell^2(\Gamma)}.
\]
Applying Lemma~\ref{lem:localise} to this vector $\psi$, we obtain some $x\in X$ with
\[
  \nuop(H_{L,x}-\lambda)\ \le\ \varepsilon+\frac{C_0}{L}.
\]
Taking the infimum over $x$ gives
\[
  \inf_{x\in X}\nuop(H_{L,x}-\lambda)\ \le\ \varepsilon+\frac{C_0}{L}.
\]
Since this holds for every $\varepsilon>\nuop(H-\lambda)$, letting $\varepsilon\downarrow\nuop(H-\lambda)$ yields the claim.
\end{proof}

\begin{remark}
Let $R$ denote the covering radius of $\Gamma$. Then the infimum over $x\in X$
may, at the expense of enlarging the window size by $R$, be restricted to centres
$p\in\Gamma$. Indeed, for every $x\in X$ there exists $p\in\Gamma$ with
$d(x,p)\le R$, and hence
\[
  B_L(x)\cap\Gamma \subset B_{L+R}(p)\cap\Gamma,
  \qquad
  B_{L+m}(x)\cap\Gamma \subset B_{L+m+R}(p)\cap\Gamma.
\]
Therefore every test vector for $H_{L,x}$ is also admissible for $H_{L+R,p}$, so
\[
  \nu(H_{L+R,p}-\lambda)\le \nu(H_{L,x}-\lambda).
\]
Consequently,
\[
  \inf_{p\in\Gamma}\nu(H_{L+R,p}-\lambda)
  \;\le\;
  \inf_{x\in X}\nu(H_{L,x}-\lambda)
  \;\le\;
  \nu(H-\lambda)+\frac{C_0}{L}.
\]
\end{remark}

Thus, the global lower norm can be estimated from above by the infimum of local lower norms with an explicit error term of order $O(1/L)$.
This estimate will be the basis for the spectral and pseudospectral
approximation results in the following sections.

\begin{remark}[The commutator as the sole source of error]
\label{rem:commutator-role}
The commutator estimate in Theorem~\ref{thm:commutator} is the central
technical ingredient of the entire argument. It isolates the only mechanism
by which localisation can fail.

Indeed, if $H$ commuted with the localisation operators $W_{L,x}$, then one
would have
\[
  (H-\lambda)W_{L,x}\psi = W_{L,x}(H-\lambda)\psi,
\]
and Lemma~\ref{lem:localise} would follow directly from
Lemma~\ref{lem:avg-W}, without any loss term. In this idealised situation
(for instance for diagonal operators), global quasi-modes would localise
exactly, and all subsequent results would hold with $C_0=0$.

Thus, the commutator $[W_{L,x},H]$ measures precisely the defect of
commutation, and the estimate in Theorem~\ref{thm:commutator} quantifies
how much error is introduced when passing from global to local information.
The explicit $O(1/L)$ term appearing throughout the paper is entirely due to
this effect.
\end{remark}

\section{Local pseudospectra and convergence}

\medskip
We now pass from the commutator estimate to quantitative pseudospectral
inclusions for the local restrictions $H_{L,x}$. The guiding principle is that
$\sigma_\varepsilon(H)$ can be characterised in terms of lower norms of
$H-\lambda$ and $H^*-\overline{\lambda}$, and our local quantities approximate
these two global objects simultaneously.

In the rectangular setting, one therefore has to treat the contributions of
$H$ and $H^*$ separately (see Section~\ref{sec:rectangular-pseudo}), and combine
them only at the level of lower norms.

\begin{theorem}[Window pseudospectra and convergence]\label{thm:window-pseudo-convergence}
Let $(X,d,\mu)$, $\Gamma \subset X$ and $H$ satisfy the settings of Assumptions~\ref{ass:standing}
and~\ref{ass:operators} and suppose that the commutator estimate (\ref{eq:commutator}) holds for
both $H$ and $H^*$ with the same constant $C_0>0$.  Let $H_{L,x}$ denote the rectangular window restriction introduced in Section~\ref{sec:band-and-windows}. For $\varepsilon > 0$ and $\eta \ge 0$ define the window
pseudospectra
\begin{align*}
  \gamma_{L,\varepsilon}(H)
  &:= \Bigl\{\lambda\in\C:\;
        \inf_{x\in X}\,
        \min\bigl\{
          \nu(H_{L,x}-\lambda),\,
          \nu((H^*)_{L,x}-\overline{\lambda})
        \bigr\}
        < \varepsilon
      \Bigr\},\\
  \Gamma_{L,\eta}(H)
  &:= \Bigl\{\lambda\in\C:\;
        \inf_{x\in X}\,
        \min\bigl\{
          \nu(H_{L,x}-\lambda),\,
          \nu((H^*)_{L,x}-\overline{\lambda})
        \bigr\}
        \le \eta
      \Bigr\}. 
\end{align*}
Then the following holds for all $L>L_0$:
\begin{enumerate}
\item[(a)] \textup{(Inclusion bounds)} For every $\varepsilon>0$ and every $\eta\ge 0$ one has
\[
  \gamma_{L,\varepsilon}(H)\subset \sigma_\varepsilon(H)\subset
  \gamma_{L,\varepsilon+C_0/L}(H),
  \qquad
  \Gamma_{L,\eta}(H)\subset \Sigma_\eta(H)\subset
  \Gamma_{L,\eta+C_0/L}(H).
\]

\item[(b)] \textup{(Hausdorff convergence)} For every fixed $\varepsilon>0$ and every fixed $\eta>0$,
\[
  \gamma_{L,\varepsilon}(H)\xrightarrow[d_H]{L\to\infty}\sigma_\varepsilon(H),
  \qquad
  \Gamma_{L,\eta}(H)\xrightarrow[d_H]{L\to\infty}\Sigma_\eta(H).
\]
Moreover,
\[
  \Gamma_{L,C_0/L}(H)\xrightarrow[d_H]{L\to\infty}\Sigma_0(H)=\sigma(H).
\]
\end{enumerate}
\end{theorem}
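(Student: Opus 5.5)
The plan is to reduce everything to a pointwise two-sided estimate for the scalar function
\[
  f_L(\lambda):=\inf_{x\in X}\min\bigl\{\nu(H_{L,x}-\lambda),\,\nu((H^*)_{L,x}-\overline{\lambda})\bigr\},
\]
namely
\[
  \nu_{comb}(H-\lambda)\;\le\; f_L(\lambda)\;\le\;\nu_{comb}(H-\lambda)+\frac{C_0}{L},
  \qquad \lambda\in\C,\ L>L_0 .
\]
The lower inequality was already derived in Section~\ref{sec:band-and-windows} from \eqref{eq:local-lower-bound} applied to $H-\lambda$ and to $H^*-\overline\lambda$. For the upper inequality I would apply Corollary~\ref{cor:global-local} twice: once to $H$, and once to $H^*$. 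The latter is again a band operator with the same $m$, $M$, $\gamma$; this is exactly where the hypothesis that the commutator estimate holds for $H^*$ with the same $C_0$ enters. Then I would use Remark~\ref{rem:inf-min-finite} to swap $\inf$ and $\min$, and take the minimum of the two bounds.

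Part (a) is then immediate from the definitions. If $\lambda\in\gamma_{L,\varepsilon}(H)$, then $\nu_{comb}(H-\lambda)\le f_L(\lambda)<\varepsilon$. Conversely, if $\nu_{comb}(H-\lambda)<\varepsilon$, then $f_L(\lambda)<\varepsilon+C_0/L$. The closed versions follow verbatim with $\le$, including the case $\eta=0$.

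For part (b), I would first note that everything lives in a fixed bounded set. Since $\nu_{comb}(H-\lambda)\ge|\lambda|-\|H\|$, all sets involved lie in the disc of radius $\|H\|+\varepsilon+C_0/L_0$. By (a) we have $\gamma_{L,\varepsilon}(H)\subset\sigma_\varepsilon(H)$, so $\sup_{s\in\gamma_{L,\varepsilon}}\dist(s,\sigma_\varepsilon)=0$. For the other direction, (a) gives $\sigma_{\varepsilon-C_0/L}(H)\subset\gamma_{L,\varepsilon}(H)$ once $C_0/L<\varepsilon$. Hence
\[
  d_H\bigl(\gamma_{L,\varepsilon}(H),\sigma_\varepsilon(H)\bigr)\le d_H\bigl(\sigma_{\varepsilon-C_0/L}(H),\sigma_\varepsilon(H)\bigr),
\]
using monotonicity of the one-sided distance under inclusion. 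The right-hand side tends to $0$ by the Hausdorff continuity of $\varepsilon\mapsto\sigma_\varepsilon(H)$ (Section~\ref{sec:hausdorff-globevnik}), which is valid here because we are in a Hilbert space, the Globevnik class. The closed case is identical with $\Sigma_{\eta-C_0/L}(H)\subset\Gamma_{L,\eta}(H)\subset\Sigma_\eta(H)$, again for $\eta>0$. I also need nonemptiness for $d_H$ to make sense; this holds because $\sigma(H)\neq\emptyset$ is contained in all of these sets for large $L$.

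For the final claim, (a) with $\eta=C_0/L$ gives $\sigma(H)=\Sigma_0(H)\subset\Gamma_{L,C_0/L}(H)\subset\Sigma_{C_0/L}(H)$. The task then reduces to $d_H(\Sigma_\delta(H),\sigma(H))\to0$ as $\delta\downarrow0$. I expect this to be the main step needing care, since the Globevnik-based continuity is stated only for limits $\varepsilon>0$. I would argue via compactness. The sets $\Sigma_\delta(H)$ are compact and decreasing, and their intersection is $\sigma(H)$, as noted at the end of Section~\ref{sec:hausdorff-globevnik}. If $\sup_{s\in\Sigma_\delta}\dist(s,\sigma(H))\ge c>0$ along $\delta_n\to0$, then a convergent subsequence of witnesses would have a limit lying in every $\Sigma_{\delta_n}$, hence in $\sigma(H)$, while remaining at distance $\ge c$ from it. This is a contradiction. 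The other one-sided distance is zero by inclusion, which finishes the proof.
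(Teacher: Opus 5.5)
Your proposal is correct and follows the paper's proof essentially step for step. Part (a) uses the same two-sided sandwich $\nu_{comb}(H-\lambda)\le f_L(\lambda)\le\nu_{comb}(H-\lambda)+C_0/L$, obtained from Corollary~\ref{cor:global-local} for $H$ and $H^*$ and then passed to sublevel sets. Part (b) uses the same inclusions $\Sigma_{\eta-C_0/L}(H)\subset\Gamma_{L,\eta}(H)\subset\Sigma_\eta(H)$ together with Hausdorff continuity in $\eta$; your compactness argument for $\Sigma_\delta(H)\to\sigma(H)$ as $\delta\downarrow 0$ simply spells out the step the paper only asserts via $\bigcap_{\eta>0}\Sigma_\eta(A)=\sigma(A)$ in Section~\ref{sec:hausdorff-globevnik}.
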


\begin{proof}
(a) Fix $\lambda\in\C$.  By Corollary~\ref{cor:global-local} applied to $H-\lambda$
and to $(H-\lambda)^*=H^*-\overline{\lambda}$ we obtain
\[
  \inf_{x\in X}\nu(H_{L,x}-\lambda)
  \;\le\; \nu(H-\lambda)+\frac{C_0}{L},
  \qquad
  \inf_{x\in X}\nu((H^*)_{L,x}-\overline{\lambda})
  \;\le\; \nu(H^*-\overline{\lambda})+\frac{C_0}{L}.
\]
Taking the minimum yields
\[
  \inf_{x\in X}\min\bigl\{\nu(H_{L,x}-\lambda),\,\nu((H^*)_{L,x}-\overline{\lambda})\bigr\}
  \;\le\; \nu_{comb}(H-\lambda)+\frac{C_0}{L}.
\]
Conversely, for every $x\in X$ we have the monotonicity
$\nu(H-\lambda)\le \nu(H_{L,x}-\lambda)$ and
$\nu(H^*-\overline{\lambda})\le \nu((H^*)_{L,x}-\overline{\lambda})$, hence
\[
  \nu_{comb}(H-\lambda)
  \;\le\;
  \inf_{x\in X}\min\bigl\{\nu(H_{L,x}-\lambda),\,\nu((H^*)_{L,x}-\overline{\lambda})\bigr\}.
\]
Thus the window quantity sandwiches $\nu_{comb}(H-\lambda)$ up to an error
$C_0/L$, and taking strict/closed sublevel sets gives the inclusions.

(b) 
If $\eta>0$, choose $L$ so large that $\eta-\frac{C_0}{L}>0$. Then the inclusions
from part~(a) give
\[
  \Sigma_{\eta-\frac{C_0}{L}}(H)
  \subset
  \Gamma_{L,\eta}(H)
  \subset
  \Sigma_{\eta}(H)
  \subset
  \Gamma_{L,\eta+\frac{C_0}{L}}(H)
  \subset
  \Sigma_{\eta+\frac{C_0}{L}}(H).
\]
Since $\eta\mapsto \Sigma_\eta(H)$ is Hausdorff--continuous on $\ell^2$
(Section~\ref{sec:hausdorff-globevnik}), we have
\[
  \Sigma_{\eta\pm \frac{C_0}{L}}(H)\xrightarrow[d_H]{}\Sigma_\eta(H)
  \qquad (L\to\infty).
\]
Hence a standard sandwich argument yields
\[
  \Gamma_{L,\eta}(H)\xrightarrow[d_H]{}\Sigma_\eta(H).
\]

If $\eta=0$, the left-hand side of the above chain is no longer available. In this case
one uses only the right-hand side of the inclusions in part~(a), namely
\[
  \Sigma_0(H)
  \subset
  \Gamma_{L,\frac{C_0}{L}}(H)
  \subset
  \Sigma_{\frac{C_0}{L}}(H).
\]
Again, Hausdorff continuity implies
\[
  \Sigma_{\frac{C_0}{L}}(H)\xrightarrow[d_H]{}\Sigma_0(H),
\]
and therefore
\[
  \Gamma_{L,\frac{C_0}{L}}(H)\xrightarrow[d_H]{}\Sigma_0(H).
\]

The statement for $\gamma_{L,\varepsilon}(H)$ and $\sigma_\varepsilon(H)$ with
$\varepsilon>0$ follows from a standard sandwich argument and
\[
  \Sigma_\varepsilon(H)=\overline{\sigma_\varepsilon(H)}
\]
(the Globevnik property) together with the fact that $d_H$ depends only on closures.
\end{proof}

\begin{remark}[Union representation]\label{rem:gamma-union}
For $\varepsilon>0$ and $L>L_0$ one has
\[
  \gamma_{L,\varepsilon}(H)
  \;=\;
  \bigcup_{x\in X}
  \Bigl(
    \sigma_\varepsilon(H_{L,x})
    \,\cup\,
    \sigma_\varepsilon\bigl((H^*)_{L,x}\bigr)^{*}
  \Bigr),
  \qquad
  S^*:=\{\overline{z}:z\in S\}.
\]
Indeed, the defining condition for $\gamma_{L,\varepsilon}(H)$ is an infimum
over $x$ being strictly smaller than $\varepsilon$, which is equivalent to the
existence of some $x$ for which one of $
          \nu(H_{L,x}-\lambda)$ and $
          \nu((H^*)_{L,x}-\overline{\lambda})$ is less than $\varepsilon$.
\end{remark}

\begin{remark}[Optimality of the $1/L$ rate]\label{rem:optimality}
The $1/L$ scaling is dictated by Lipschitz localisation: any cutoff $f$ supported in $B_L(x_0)$ with $0\le f\le 1$ and $f(x_0)=1$ must satisfy $\Lip(f)\ge 1/L$ whenever the metric realises distances at scale $L$. Thus commutator bounds based on compactly supported Lipschitz cutoffs cannot, in general, decay faster than $O(1/L)$.
\end{remark}

\subsection{Local gap estimates and spectral consequences}

We now summarise the consequences of the localisation estimates for the
(global) lower norm and the spectrum for finite-interaction-range operators on spaces satisfying Assumption \ref{ass:standing}. Let $\Gamma$ and $H$ as in Assumption \ref{ass:operators} and let $C_0:=\gamma mM\sqrt{C_{\mathrm{geom}}}$ denote the constant from Lemma \ref{lem:localise}.

For $L>L_0$ and $\lambda\in\C$ define
\[
  \alpha_L(\lambda)
  := \inf_{x\in X}\nuop(H_{L,x}-\lambda)\qquad
  \alpha_{L,comb}(\lambda)
  := \inf_{x\in X}\min\bigl\{\nu(H_{L,x}-\lambda),\,\nu((H^*)_{L,x}-\overline{\lambda})\bigr\}.
\]

\begin{corollary}[Local lower norm controls distance to spectrum]
\label{thm:local-gap-general}
For every $\lambda\in\C$ and $L>L_0$ one has
\[
  \dist(\lambda,\sigma(H))
  \;\ge\;
  \nu_{comb}(H-\lambda)
  \;\ge\;
  \alpha_{L, comb}(\lambda)-\frac{C_0}{L}.
\]
\end{corollary}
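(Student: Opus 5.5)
The corollary combines two inequalities, and I will prove them separately.

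\emph{Second inequality.} This is a rearrangement of what was already shown in the proof of Theorem~\ref{thm:window-pseudo-convergence}(a). First apply Corollary~\ref{cor:global-local} to $H$ at $\lambda$. Then apply it to $H^*$ at $\overline{\lambda}$; this is legitimate because $H^*$ is again a band operator with interaction range $m$, bound $M$ and the same interaction degree $\gamma$, since $\gamma$ is defined symmetrically in $H_{pq}$ and $H_{qp}$. Hence the same constant $C_0$ applies. Taking the minimum of the two bounds and using Remark~\ref{rem:inf-min-finite} to swap $\inf$ and $\min$ gives
\[
  \alpha_{L,comb}(\lambda)\le \nu_{comb}(H-\lambda)+\frac{C_0}{L}.
\]
Rearranging yields the second inequality.

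\emph{First inequality.} I want to show $\dist(\lambda,\sigma(H))\ge\nu_{comb}(H-\lambda)$.

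If $\lambda\in\sigma(H)$, then $\nu_{comb}(H-\lambda)=0$ by \eqref{eq: lower norm equals norm inverse}, and the claim is trivial.

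Otherwise, let $\lambda\in\rho(H)$ and let $\mu\in\sigma(H)$ be arbitrary. I will use the Lipschitz continuity of the lower norm. Since $\|(H-\lambda)-(H-\mu)\|=|\lambda-\mu|$, we get
\[
  |\nu(H-\lambda)-\nu(H-\mu)|\le|\lambda-\mu|,
\]
and the same bound holds for the adjoints. Consequently $\nu_{comb}$, being a minimum of two $1$-Lipschitz functions of $\lambda$, is itself $1$-Lipschitz in $\lambda$. Because $\mu\in\sigma(H)$, identity \eqref{eq: lower norm equals norm inverse} gives $\nu_{comb}(H-\mu)=0$. Therefore
\[
  \nu_{comb}(H-\lambda)\le|\lambda-\mu|.
\]
Taking the infimum over $\mu\in\sigma(H)$ gives the claim. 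The case $\sigma(H)=\emptyset$ cannot occur for a bounded operator on a nonzero complex Hilbert space.

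The only point needing care is the use of \eqref{eq: lower norm equals norm inverse} to identify $\nu_{comb}(H-\mu)=0$ exactly on the spectrum, together with the uniformity of $C_0$ for $H^*$. Neither is a real obstacle.
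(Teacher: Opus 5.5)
Your proof is correct and follows the paper's route. The second inequality is obtained exactly as in the paper, from Corollary~\ref{cor:global-local} applied to $H$ at $\lambda$ and to $H^*$ at $\overline{\lambda}$, and your check that $H^*$ has the same $m$, $M$ and $\gamma$ (hence the same $C_0$) makes explicit what the paper leaves implicit. For the first inequality, which the paper only cites as standard, your argument via the $1$-Lipschitz dependence of $\nu_{comb}(H-\lambda)$ on $\lambda$ and its vanishing on $\sigma(H)$ is a valid self-contained substitute for the usual Neumann-series bound $\|(H-\lambda)^{-1}\|\ge 1/\dist(\lambda,\sigma(H))$.
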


\begin{proof}
The first inequality is standard \cite{Lindner2006, RabinovichRochSilbermann2004}. The second follows from
Corollary~\ref{cor:global-local}.
\end{proof}
\noindent
If $H$ is normal, then the first inequality becomes an equality and $\nu_{comb}(H-\lambda)=\nu(H-\lambda)$, i.e. $\dist(\lambda,\sigma(H))=\nuop(H-\lambda) \geq \alpha_{L}(\lambda)-\frac{C_0}{L}$. 

In that case, a strictly positive lower bound on
$\alpha_L(\lambda)-C_0/L$ yields an explicit lower bound on
$\dist(\lambda,\sigma(H))$, and hence guarantees a spectral gap.

\begin{corollary}[Local gap test]
\label{cor:local-gap}
Let $\lambda\in\C$ and $L>L_0$.

\begin{enumerate}
\item If $\delta:=\alpha_{L,\mathrm{comb}}(\lambda)-\frac{C_0}{L}>0, $
then
\[
  B_\delta(\lambda)\cap\sigma(H)=\varnothing.
\]

\item If, in addition, $H$ is normal, then the combined local lower norm may be
replaced by the ordinary local lower norm. Thus, if
$ \delta:=\alpha_L(\lambda)-\frac{C_0}{L}>0,$
then
\[
  B_\delta^\circ(\lambda)\cap\sigma(H)=\varnothing.
\]
\end{enumerate}
\end{corollary}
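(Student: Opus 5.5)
The plan is to obtain a \emph{strict} inequality $\nu_{comb}(H-\lambda)>\delta$. Corollary~\ref{thm:local-gap-general} as stated only gives $\dist(\lambda,\sigma(H))\ge\nu_{comb}(H-\lambda)\ge\delta$, and that is not enough. Since $\sigma(H)$ is closed, it could still contain a point $\mu$ with $|\mu-\lambda|=\delta$ exactly. Once strictness is available, part~1 is immediate: every $\mu$ with $|\mu-\lambda|\le\delta<\nu_{comb}(H-\lambda)=\|(H-\lambda)^{-1}\|^{-1}$ lies in the resolvent set by the Neumann series for $H-\mu=(H-\lambda)\bigl(I-(\mu-\lambda)(H-\lambda)^{-1}\bigr)$. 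This argument needs $\delta>0$, which forces $\nu_{comb}(H-\lambda)>0$, so $H-\lambda$ is invertible by \eqref{eq: lower norm equals norm inverse}.

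To get strictness, I would revisit the last step of the proof of Theorem~\ref{thm:commutator}, where the authors bound $\|w_{L,e}\|_{L^2(X)}^2\ge\frac14V(L/2)$. On the open ball $U:=\{y:d(y,e)<L/2\}$ we have $w_{L,e}(y)>\frac12$ strictly. Moreover, $U\supset B_{L/4}(e)$, so $\mu(U)\ge V(L/4)>0$ by Assumption~\ref{ass:standing}. Hence $\int_{B_{L/2}(e)}w_{L,e}^2\,d\mu>\frac14V(L/2)$: on the set $U$ of positive measure the integrand strictly exceeds $\frac14$, and elsewhere in $B_{L/2}(e)$ it is at least $\frac14$ (finite measure throughout). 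So for the fixed $L$ there is $\theta=\theta(L)\in(0,1)$ with $V(L/2)\le 4\theta\|w_{L,e}\|_{L^2}^2$.

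Rerunning the proof of Theorem~\ref{thm:commutator} with this improvement gives the commutator estimate \eqref{eq:commutator} with $C_{\mathrm{geom}}$ replaced by $\theta C_{\mathrm{geom}}$. The same holds for $H^*$, since its parameters $m,M,\gamma$ coincide with those of $H$. I then feed this into Lemma~\ref{lem:localise} and Corollary~\ref{cor:global-local}, both applied verbatim, and obtain
\[
\alpha_{L,comb}(\lambda)\le \nu_{comb}(H-\lambda)+\sqrt{\theta}\,\frac{C_0}{L}.
\]
Because $C_0=\gamma mM\sqrt{C_{\mathrm{geom}}}>0$ and $\sqrt\theta<1$, it follows that $\nu_{comb}(H-\lambda)\ge\alpha_{L,comb}(\lambda)-\sqrt\theta\,C_0/L>\alpha_{L,comb}(\lambda)-C_0/L=\delta$. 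This is the strict inequality needed. I expect this step to be the main obstacle: one has to notice that strictness cannot come from the $\varepsilon\downarrow\nu$ limiting argument (that argument only gives $\le$) and must instead be extracted from slack in a fixed geometric constant. One caveat: if $\gamma=0$, then $C_0=0$, $H$ is diagonal, and the strictness trick fails. In that degenerate case I would treat $H$ directly as a multiplication operator, or simply note that the positivity $C_0>0$ is assumed, as in Theorem~\ref{thm:window-pseudo-convergence}.

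For part~2, $H$ normal, I use the remark after Corollary~\ref{thm:local-gap-general}: $\dist(\lambda,\sigma(H))=\nu(H-\lambda)=\nu_{comb}(H-\lambda)$, together with Corollary~\ref{cor:global-local} applied to $H-\lambda$ alone. This gives $\dist(\lambda,\sigma(H))\ge\alpha_L(\lambda)-C_0/L=\delta$. Every $\mu\in\sigma(H)$ therefore satisfies $|\mu-\lambda|\ge\delta$, so $B^\circ_\delta(\lambda)\cap\sigma(H)=\varnothing$. The same $\theta$-refinement would in fact upgrade this conclusion to the closed ball as well.
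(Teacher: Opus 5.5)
Your proposal is correct. For part~1 it takes a genuinely different route from the paper, and a stronger one.

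\textbf{Part~1.} The paper's proof simply cites Corollary~\ref{thm:local-gap-general}. That corollary only yields $\dist(\lambda,\sigma(H))\ge\nu_{comb}(H-\lambda)\ge\delta$. As you correctly observe, this excludes spectrum from the open ball $B^\circ_\delta(\lambda)$ but not from the closed ball $B_\delta(\lambda)$ asserted in part~1. So the paper's one-line argument really establishes only the open-ball conclusion, the same form as in part~2.

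You obtain the missing strict inequality by noting that the bound $\|w_{L,e}\|_{L^2}^2\ge\frac14V(L/2)$ is never sharp. In fact $\|w_{L,e}\|_{L^2}^2\ge\frac14V(L/2)+\frac5{16}V(L/4)\ge\bigl(\frac14+\frac5{16C_{\mathrm{dbl}}}\bigr)V(L/2)$, so your $\theta$ can even be chosen independent of $L$. This slack propagates through Theorem~\ref{thm:commutator}, Lemma~\ref{lem:localise} and Corollary~\ref{cor:global-local}, for $H$ and for $H^*$, which have the same $m,M,\gamma$. The result is $\alpha_{L,comb}(\lambda)\le\nu_{comb}(H-\lambda)+\sqrt\theta\,C_0/L$, hence $\nu_{comb}(H-\lambda)>\delta$. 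Your Neumann-series step then correctly covers the boundary circle $|\mu-\lambda|=\delta$.

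The cost is re-running the localisation chain instead of quoting it. The gain is the closed-ball conclusion, in part~1 and, as you note, in part~2 as well.

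\textbf{Part~2.} Your argument is identical to the paper's: normality gives $\dist(\lambda,\sigma(H))=\nu(H-\lambda)$, combined with Corollary~\ref{cor:global-local}.

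\textbf{Correction to your caveat.} When $\gamma=0$, so that $C_0=0$, the closed-ball statement is not merely unproved by your method; it is false. Take $H=I$ and $\lambda=0$. Every window is nonempty, so $\alpha_{L,comb}(0)=1=\delta$, yet $1\in B_1(0)\cap\sigma(H)$. Your fallback of treating $H$ directly as a multiplication operator therefore cannot succeed. The hypothesis $C_0>0$, equivalently $\gamma\ge1$, is genuinely needed for part~1 as stated; without it only the open-ball version survives.
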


\begin{proof}
The first assertion follows directly from
Corollary~\ref{thm:local-gap-general}. If $H$ is normal, then
\[
  \dist(\lambda,\sigma(H))=\nu(H-\lambda)
  \ge \alpha_L(\lambda)-\frac{C_0}{L},
\]
where the inequality follows from Corollary~\ref{cor:global-local}. This proves the
second assertion.
\end{proof}

\medskip

\noindent
In the self-adjoint case, $\sigma(H)\subset\R$, and the previous result yields
open intervals free of spectrum:
if $\lambda\in\R$ and $\delta>0$ as above, then
\[
  (\lambda-\delta,\lambda+\delta)\cap\sigma(H)=\varnothing.
\]

\begin{corollary}[Sampling the spectrum from local lower norms in the normal case]
\label{cor:normal-sampling-grid}
Let $\Gamma$ and $H$ satisfy Assumption \ref{ass:operators} and assume that $H$ is normal. 
For fixed $\delta>0$, choose $L$ so large that
\[
  \frac{2C_0}{L}<\delta.
\]
Set
\[
  h_L:=\frac{\sqrt2\,C_0}{L}
  \qquad\text{and}\qquad
  R>\|H\|+\frac{C_0}{L},
\]
and consider the finite grid
\[
  G_{L,R}
  :=\bigl\{a+ib\in\C:\ a,b\in h_L\Z,\ |a+ib|\le R\bigr\}.
\]
Define
\[
  \Lambda_L(\delta)
  :=\bigl\{\lambda\in G_{L,R}:\ \alpha_L(\lambda)<\delta\bigr\}.
\]
Then
\[
  d_H\bigl(\sigma(H),\Lambda_L(\delta)\bigr)<\delta.
\]
In particular, the spectrum of a normal finite--interaction--range operator can be
approximated arbitrarily well in Hausdorff distance by finitely many sample
points detected from local lower norms.
\end{corollary}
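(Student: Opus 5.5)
The plan is to prove the two halves of the Hausdorff distance separately. Both halves use only two facts. The first is that for normal $H$ one has $\dist(\lambda,\sigma(H))=\nu(H-\lambda)$, as recorded after Corollary~\ref{thm:local-gap-general}. The second is the two-sided sandwich
\[
  \nu(H-\lambda)\;\le\;\alpha_L(\lambda)\;\le\;\nu(H-\lambda)+\frac{C_0}{L},
\]
whose lower bound is \eqref{eq:local-lower-bound} (after taking the infimum over $x$) and whose upper bound is Corollary~\ref{cor:global-local}. Combined, these give $\dist(\lambda,\sigma(H))\le\alpha_L(\lambda)\le \dist(\lambda,\sigma(H))+C_0/L$ for every $\lambda\in\C$.

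\emph{Step 1: $\sup_{\lambda\in\Lambda_L(\delta)}\dist(\lambda,\sigma(H))<\delta$.} If $\lambda\in\Lambda_L(\delta)$, then $\dist(\lambda,\sigma(H))\le\alpha_L(\lambda)<\delta$. Since $G_{L,R}$ is finite, $\Lambda_L(\delta)$ is finite. The supremum is therefore a maximum of finitely many numbers, each strictly below $\delta$, so it is itself strictly below $\delta$.

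\emph{Step 2: $\sup_{s\in\sigma(H)}\dist(s,\Lambda_L(\delta))<\delta$.} Fix $s\in\sigma(H)$ and round its real and imaginary parts separately to the nearest points of $h_L\Z$. This gives a point $\lambda=a+ib$ with $|\lambda-s|\le \sqrt{(h_L/2)^2+(h_L/2)^2}=h_L/\sqrt2=C_0/L$.
\begin{itemize}
\item Since $|s|\le\|H\|$, we get $|\lambda|\le\|H\|+C_0/L<R$, so $\lambda\in G_{L,R}$.
\item Moreover $\dist(\lambda,\sigma(H))\le|\lambda-s|\le C_0/L$, so the sandwich yields $\alpha_L(\lambda)\le 2C_0/L<\delta$. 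Hence $\lambda\in\Lambda_L(\delta)$.
\end{itemize}
Thus $\dist(s,\Lambda_L(\delta))\le C_0/L$ uniformly in $s$, and the supremum is at most $C_0/L<\delta/2<\delta$.

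\emph{Nonemptiness.} Since $\sigma(H)\neq\varnothing$ (bounded operator on a nonzero complex Hilbert space), Step~2 also shows that $\Lambda_L(\delta)\neq\varnothing$. Hence $d_H$ is well defined and, by Steps~1 and~2, strictly less than $\delta$.

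The only delicate point is keeping the inequality strict. This is why Step~1 uses the finiteness of the grid, and why Step~2 uses a uniform bound of $C_0/L$ rather than taking a supremum of quantities that are each merely $<\delta$. The rest is routine bookkeeping with the grid spacing $h_L=\sqrt2\,C_0/L$, which is chosen exactly so that the rounding error equals $C_0/L$.
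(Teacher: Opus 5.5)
Your proposal is correct and follows the paper's proof essentially verbatim. One direction rounds a spectral point to a grid point within $C_0/L$ and then applies Corollary~\ref{cor:global-local}; the other uses monotonicity together with $\dist(\lambda,\sigma(H))=\nu(H-\lambda)$ for normal $H$. Your extra checks (that the rounded point lies in $G_{L,R}$, that $\Lambda_L(\delta)\neq\varnothing$, and that strict inequality survives because the grid is finite) supply details the paper leaves implicit.
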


\begin{proof}
Let $\lambda\in\sigma(H)$. By construction of the grid, there exists
$\lambda'\in G_{L,R}$ with
\[
  |\lambda-\lambda'|\le \frac{h_L}{\sqrt2}=\frac{C_0}{L}.
\]
Since $\nuop$ is 1-Lipschitz and $\nuop(H-\lambda)=0$, we have
\[
  \nuop(H-\lambda')\le \nuop(H-\lambda) + |\lambda-\lambda'|\le \frac{C_0}{L}.
\]
By Corollary~\ref{cor:global-local},
\[
  \alpha_L(\lambda')
  \le
  \nuop(H-\lambda')+\frac{C_0}{L}
  \le
  \frac{2C_0}{L}
  <\delta.
\]
Thus $\lambda'\in\Lambda_L(\delta)$, and every spectral point lies within
distance at most $C_0/L<\delta$ of $\Lambda_L(\delta)$.

Conversely, let $\lambda\in\Lambda_L(\delta)$. Then $\alpha_L(\lambda)<\delta$,
and by monotonicity of local lower norms,
\[
  \nuop(H-\lambda)\le \alpha_L(\lambda)<\delta.
\]
Since $H$ is normal,
\[
  \dist(\lambda,\sigma(H))=\nuop(H-\lambda)<\delta.
\]
Hence every point of $\Lambda_L(\delta)$ lies within distance $<\delta$ of
$\sigma(H)$. The two estimates together imply
\[
  d_H\bigl(\sigma(H),\Lambda_L(\delta)\bigr)<\delta.
\]
\end{proof}

\begin{remark}[Practical procedure]
\label{rem:sampling-procedure}
Corollary~\ref{cor:normal-sampling-grid} yields the following finite procedure:
\begin{enumerate}
\item choose $\delta>0$ and $L$ as in the corollary;
\item evaluate $\alpha_L(\lambda)$ for $\lambda\in G_{L,R}$;
\item retain those grid points for which $\alpha_L(\lambda)<\delta$.
\end{enumerate}
Then $\Lambda_L(\delta)$ is a rigorous $\delta$--Hausdorff approximation of $\sigma(H)$. Under finite local complexity, the values $\alpha_L(\lambda)$ are determined by finitely many local lower norms, so that the whole procedure can in fact be carried out in finite time.
\end{remark}

The normal case relied on the identity
$\nu(H-\lambda)=\dist(\lambda,\sigma(H))$,
which allows one to recover spectral information directly from lower norms. In the non-normal case, this mechanism breaks down: although $\nu_{\mathrm{comb}}(H-\lambda)$ still detects invertibility, it no longer determines the distance from $\lambda$ to the spectrum. Since the spectrum may then be highly unstable under small perturbations, the natural object to approximate is the pseudospectrum.

This also changes the algorithmic picture. In general, the required grid size is no longer available \emph{a priori}; instead, one must refine the grid until successive local pseudospectral approximants are sufficiently close. The continuity of the pseudospectrum in $\varepsilon$ guarantees that this stage is eventually reached, and the resulting closeness condition provides a stopping criterion. Thus the procedure remains deterministic, although the necessary grid resolution is only identified during the computation. This leads to the following stopping criterion.

\begin{theorem}[Approximation of the pseudospectrum in the non-normal case]
\label{thm:pseudospectrum-nonnormal}
Let $H$ be as in Assumption \ref{ass:operators}, and fix $\varepsilon>0$ and $\delta>0$. For $L>L_0$, set
\[
h_L\coloneqq \frac{C_0}{L} \text{ and } R > \|H\|+\varepsilon+\frac{C_0}{L}.
\]
Let $G_{L,R}\subset\C$ be the grid with mesh size $h_L$ (as in Corollary \ref{cor:normal-sampling-grid}), and define
\[
\Lambda_\eta^{(h_L)}:=\{\mu\in G_{L,R}:\alpha_{L,comb}(\mu) < \eta\},
\qquad \eta > 0.
\]
Choose $L>L_0$ so large that
\[
h_L<\min \{ \frac{\delta}{4}, \frac{\varepsilon}{2} \}
\qquad\text{and}\qquad
d_H\bigl(\Lambda_{\varepsilon+2h_L}^{(h_L)},\Lambda_\varepsilon^{(h_L)}\bigr)<\frac{\delta}{2}.
\]
Then
\[
d_H\bigl(\sigma_\varepsilon(H),\Lambda_\varepsilon^{(h_L)}\bigr)<\delta.
\]
\end{theorem}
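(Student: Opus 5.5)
The plan is to prove the two halves of the Hausdorff estimate separately. First, every point of $\sigma_\varepsilon(H)$ lies within $\delta$ of $\Lambda_\varepsilon^{(h_L)}$. Second, every point of $\Lambda_\varepsilon^{(h_L)}$ lies within $\delta$ of $\sigma_\varepsilon(H)$. The main tools are three:
\begin{itemize}
\item the sandwich $\nu_{comb}(H-\lambda)\le\alpha_{L,comb}(\lambda)\le\nu_{comb}(H-\lambda)+C_0/L$ from the proof of Theorem~\ref{thm:window-pseudo-convergence}(a);
\item the $1$-Lipschitz property of $\lambda\mapsto\nu_{comb}(H-\lambda)$, which follows from $|\nu(A)-\nu(B)|\le\|A-B\|$ applied to $H-\lambda$ and $H^*-\overline{\lambda}$;
\item the fact that the grid has mesh $h_L$, so every $\lambda$ with $|\lambda|\le R-h_L$ has a grid point within $h_L/\sqrt2\le h_L$.
\end{itemize}

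\emph{Second half.} This half is easy. If $\mu\in\Lambda_\varepsilon^{(h_L)}$, then $\nu_{comb}(H-\mu)\le\alpha_{L,comb}(\mu)<\varepsilon$. Hence $\mu\in\sigma_\varepsilon(H)$ itself, and its distance to $\sigma_\varepsilon(H)$ is $0$. So no error arises in this direction.

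\emph{First half.} Here the stopping criterion enters. Let $\lambda\in\sigma_\varepsilon(H)$, so $\nu_{comb}(H-\lambda)<\varepsilon$.
\begin{itemize}
\item Since $\nu_{comb}(H-\lambda)\ge\dist(\lambda,\sigma(H))\ge|\lambda|-\|H\|$, we get $|\lambda|<\|H\|+\varepsilon$.
\item By the choice of $R$, the nearest grid point $\mu$ with $|\lambda-\mu|\le h_L$ satisfies $|\mu|\le R$, so it belongs to $G_{L,R}$. (If one wants to be careful about the rounding near the boundary, use $R>\|H\|+\varepsilon+C_0/L$ together with the mesh estimate $h_L/\sqrt2$.)
\item The Lipschitz property and the sandwich give $\alpha_{L,comb}(\mu)\le\nu_{comb}(H-\mu)+h_L<\varepsilon+2h_L$. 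Thus $\mu\in\Lambda_{\varepsilon+2h_L}^{(h_L)}$.
\item The stopping criterion $d_H(\Lambda_{\varepsilon+2h_L}^{(h_L)},\Lambda_\varepsilon^{(h_L)})<\delta/2$ gives some $\mu'\in\Lambda_\varepsilon^{(h_L)}$ with $|\mu-\mu'|<\delta/2$.
\item Therefore $\dist(\lambda,\Lambda_\varepsilon^{(h_L)})<h_L+\delta/2<\delta/4+\delta/2$.
\end{itemize}

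Taking suprema in both halves gives $d_H<\delta$ strictly. To keep the inequality strict after passing to suprema, note the following. The distance to the finite set $\Lambda_\varepsilon^{(h_L)}$ is a continuous function, and the pointwise bound above is uniform: for any point it is at most $h_L+\max_{\mu\in\Lambda_{\varepsilon+2h_L}}\dist(\mu,\Lambda_\varepsilon)$. Both grid sets are finite, so the maximum is attained, and the supremum is at most $\delta/4+(\text{a quantity}<\delta/2)<\delta$.

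\emph{Nonemptiness.} One must also check that $\Lambda_\varepsilon^{(h_L)}$ is nonempty, so that $d_H$ is defined. Since $\sigma(H)\neq\varnothing$, take $\lambda\in\sigma(H)$. Its nearest grid point $\mu$ satisfies $\alpha_{L,comb}(\mu)\le 2h_L<\varepsilon$, using the hypothesis $h_L<\varepsilon/2$. So $\mu\in\Lambda_\varepsilon^{(h_L)}$.

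The main obstacle I expect is this bookkeeping around the grid: ensuring that the nearest grid point lies within the disc of radius $R$, and that the mesh-to-distance constant is $h_L$ rather than $h_L/\sqrt2$ for a square grid. The first attempt is to use the crude bound $|\lambda-\mu|\le h_L$ throughout, which the hypotheses on $h_L$ and $R$ accommodate.
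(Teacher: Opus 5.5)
Your argument is the paper's own: first $\Lambda_\varepsilon^{(h_L)}\subset\sigma_\varepsilon(H)$ from the lower half of the sandwich. Then, for $\lambda\in\sigma_\varepsilon(H)$, you take a grid point $\mu\in\Lambda^{(h_L)}_{\varepsilon+2h_L}$ within $h_L$ and use the stopping criterion to move to $\Lambda^{(h_L)}_\varepsilon$. Your extra bookkeeping is more careful than the paper, which leaves these points implicit. This covers the grid point lying in the disc of radius $R$, nonemptiness of $\Lambda^{(h_L)}_\varepsilon$ via $h_L<\varepsilon/2$, and keeping the inequality strict after taking suprema.

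There is one incorrect step. You justify $|\lambda|<\|H\|+\varepsilon$ via $\nu_{comb}(H-\lambda)\ge\dist(\lambda,\sigma(H))$, but this inequality goes the wrong way. In general one only has $\nu_{comb}(H-\lambda)=\|(H-\lambda)^{-1}\|^{-1}\le\dist(\lambda,\sigma(H))$, as in Corollary~\ref{thm:local-gap-general}, with equality for normal $H$. For non-normal $H$ the left-hand side can be far smaller, which is the whole point of pseudospectra: $\sigma_\varepsilon(H)$ is typically much larger than the $\varepsilon$-neighbourhood of $\sigma(H)$.

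The bound you need is nevertheless true and follows directly. For every $\psi$ one has $\|(H-\lambda)\psi\|\ge(|\lambda|-\|H\|)\|\psi\|$, and likewise for $H^*-\overline{\lambda}$ since $\|H^*\|=\|H\|$. Hence $\nu_{comb}(H-\lambda)\ge|\lambda|-\|H\|$, and so $\sigma_\varepsilon(H)\subset\{\lambda:|\lambda|<\|H\|+\varepsilon\}$. With this replacement your proof of the stated implication is complete.

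You treat the stopping criterion purely as a hypothesis. The paper's proof additionally shows that it holds for all sufficiently large $L$, using Hausdorff continuity of $\eta\mapsto\sigma_\eta(H)$ together with $h_L<\varepsilon/2$, so that $\sigma_{\varepsilon-2h_L}(H)$ makes sense. That existence argument is what justifies ``choose $L$'' and guarantees the procedure terminates, but it is not needed for the conditional statement you proved.
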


\begin{proof}
Since $\Lambda_\varepsilon^{(h_L)}\subset \sigma_\varepsilon(H)$, it suffices to show that
\[
\sup_{\lambda\in\sigma_\varepsilon(H)}
\dist\bigl(\lambda,\Lambda_\varepsilon^{(h_L)}\bigr)<\delta.
\]

The proof has two ingredients. First, one can indeed choose $L$ so large that
\[
d_H\bigl(\Lambda_{\varepsilon+2h_L}^{(h_L)},\Lambda_\varepsilon^{(h_L)}\bigr)<\frac{\delta}{2}.
\]
In the Globevnik case, the map
\[
\eta\longmapsto \sigma_\eta(H)
\]
is continuous with respect to the Hausdorff distance. So one can choose $L>L_0$ such that
\[
d_H\bigl(\sigma_{\varepsilon+2h_L}(H),\sigma_{\varepsilon-2h_L}(H)\bigr) < \frac{\delta}{4}.
\]
Let $\eta\in \Lambda_{\varepsilon+2h_L}^{(h_L)}$. Then $\eta\in \sigma_{\varepsilon+2h_L}(H)$. Hence we can choose
\[
\eta'\in \sigma_{\varepsilon-2h_L}(H)
\]
such that
\[
|\eta-\eta'|
\le d_H\bigl(\sigma_{\varepsilon+2h_L}(H),\sigma_{\varepsilon-2h_L}(H)\bigr) < \frac{\delta}{4}
\]
Since $G_L$ has mesh size $h_L$, there exists $\eta''\in G_L$ with
\[
|\eta'-\eta''|< h_L < \frac{\delta}{4}.
\]
Using the $1$-Lipschitz continuity of the combined lower norm, we obtain
\[
\nu_{comb}(H-\eta'')
\le \nu_{comb}(H-\eta')+|\eta'-\eta''|
< \varepsilon-2h_L+|\eta'-\eta''|
< \varepsilon-h_L.
\]
Hence
\[
\alpha_{L,comb}(\eta'')
\le \nu_{comb}(H-\eta'')+\frac{C_0}{L}
< \varepsilon-h_L+h_L
= \varepsilon,
\]
so that $\eta''\in \Lambda_\varepsilon^{(h_L)}$. Moreover,
\[
|\eta-\eta''|
\le |\eta-\eta'|+|\eta'-\eta''|
< \frac{\delta}{4}+h_L < \frac{\delta}{2}.
\]
\noindent
Thus we may choose $L$ so large that
\[
h_L<\frac{\delta}{4}
\qquad\text{and}\qquad
d_H\bigl(\Lambda_{\varepsilon+2h_L}^{(h_L)},\Lambda_\varepsilon^{(h_L)}\bigr)<\frac{\delta}{2}.
\]

Second, let $\lambda\in \sigma_\varepsilon(H)$. Since $G_L$ has mesh size $h_L$, there exists $\lambda'\in G_L$ such that
\[
|\lambda-\lambda'|<h_L.
\]
Therefore,
\[
\alpha_{L,comb}(\lambda')
\le \nu_{comb}(H-\lambda')+\frac{C_0}{L}
\le \nu_{comb}(H-\lambda)+|\lambda-\lambda'|+\frac{C_0}{L}
< \varepsilon+2h_L.
\]
Hence
\[
\lambda'\in \Lambda_{\varepsilon+2h_L}^{(h_L)}.
\]
It follows that
\[
\dist\bigl(\lambda,\Lambda_{\varepsilon+2h_L}^{(h_L)}\bigr)<h_L<\frac{\delta}{4}.
\]

Combining this with the choice of $L$, we obtain
\[
\dist\bigl(\lambda,\Lambda_\varepsilon^{(h_L)}\bigr)
\le
\dist\bigl(\lambda,\Lambda_{\varepsilon+2h_L}^{(h_L)}\bigr)
+
d_H\bigl(\Lambda_{\varepsilon+2h_L}^{(h_L)},\Lambda_\varepsilon^{(h_L)}\bigr)
<
\frac{\delta}{4}+\frac{\delta}{2}
<
\delta.
\]
Taking the supremum over all $\lambda\in \sigma_\varepsilon(H)$ yields
\[
d_H\bigl(\sigma_\varepsilon(H),\Lambda_\varepsilon^{(h_L)}\bigr)<\delta,
\]
as claimed.
\end{proof}

\begin{remark}[Practical procedure in the non-normal case]
\label{rem:pseudo-sampling-procedure}
Theorem~\ref{thm:pseudospectrum-nonnormal} suggests the following deterministic
procedure for approximating $\sigma_\varepsilon(H)$:
\begin{enumerate}
\item fix $\varepsilon>0$ and $\delta>0$, and choose an initial scale $L>L_0$;
\item form the grid $G_{L,R}$ with mesh size $h_L=C_0/L$ and compute
\[
  \Lambda_\varepsilon^{(h_L)}
  =\{\mu\in G_{L,R}:\alpha_{L,\mathrm{comb}}(\mu) < \varepsilon\},
  \qquad
  \Lambda_{\varepsilon+2h_L}^{(h_L)}
  =\{\mu\in G_{L,R}:\alpha_{L,\mathrm{comb}}(\mu) < \varepsilon+2h_L\};
\]
\item check whether
\[
  h_L<\frac{\delta}{4}
  \qquad\text{and}\qquad
  d_H\bigl(\Lambda_{\varepsilon+2h_L}^{(h_L)},\Lambda_\varepsilon^{(h_L)}\bigr)
  <\frac{\delta}{2};
\]
\item if the stopping condition (3.) is satisfied, output $\Lambda_\varepsilon^{(h_L)}$;
otherwise increase $L$ (equivalently, refine the grid) and repeat.
\end{enumerate}
By continuity of the pseudospectrum in $\varepsilon$, this procedure eventually
stops, and the output is then a rigorous $\delta$--Hausdorff approximation of
$\sigma_\varepsilon(H)$. Thus, although the required grid size is not known
\emph{a priori}, it is found after finitely many refinement steps. Under finite local
complexity, each evaluation of $\alpha_{L,\mathrm{comb}}$ is determined by finitely
many local combined lower norms, so that every stage of the procedure can be carried
out in finite time.
\end{remark}

\section{Extensions and outlook}

\subsection{Extension to non-relatively dense sets}

As already indicated, the present framework can likely be extended to uniformly discrete sets
$\Gamma\subset X$ that are not relatively dense and may therefore have arbitrarily large gaps.
In the arguments above, relative denseness is mainly used to ensure that, for sufficiently large $L$,
the local windows $B_L(x)\cap\Gamma$ are nonempty for all $x\in X$, so that the corresponding
finite sections are defined on nontrivial spaces. There are at least two ways to deal with this issue.

A first possibility is to keep the present setup and modify only the treatment of empty windows. More precisely, in all statements and proofs involving finite sections --- in particular in Theorem~\ref{thm:commutator}, Lemma~\ref{lem:localise}, and Corollary~\ref{cor:global-local} --- one may restrict the integration and the infimum over $x$ to those centres for which the finite section exists, i.e. to
\[
  X_L(\Gamma):=\{x\in X:\, B_L(x)\cap\Gamma\neq\emptyset\}.
\]
Equivalently, one may declare the lower norm of a non-existing finite section to be equal to $+\infty$. With this modification, the corresponding arguments do not appear to require any essential change, since the estimates are local and empty windows simply do not contribute to the minimisation. In this way, Theorem~\ref{thm:window-pseudo-convergence} extends to uniformly discrete sets $\Gamma\subset X$ without relative denseness.

A second possibility is to embed $\Gamma$ into a larger relatively dense set $\widetilde{\Gamma}\supset\Gamma$
and to extend the operator accordingly. Writing
\[
  \ell^2(\widetilde{\Gamma})
  =
  \ell^2(\Gamma)\oplus \ell^2(\widetilde{\Gamma}\setminus\Gamma),
\]
one may define
\[
  \widetilde{H}:=H\oplus C\,I_{\ell^2(\widetilde{\Gamma}\setminus\Gamma)},
\]
where $C\in\C$ is chosen outside the spectral region of interest. Then
\[
  \sigma(\widetilde{H})=\sigma(H)\cup\{C\},
\]
and similarly,
\[
  \sigma_\varepsilon(\widetilde{H})
  =
  \sigma_\varepsilon(H)\cup \overline{B_\varepsilon(C)}.
\]
Thus one recovers the relatively dense situation at the price of adding a completely explicit and
easily removable spectral component.

\subsection{Possible extensions: Banach-valued functions.}
A further natural extension is to replace the scalar-valued space $\ell^2(\Gamma)$ by the Banach-valued sequence space
\[
  \ell^2(\Gamma,E)
  :=
  \Bigl\{u:\Gamma\to E:\ \sum_{p\in\Gamma}\|u(p)\|_E^2<\infty\Bigr\},
\]
where $E$ is a Banach space, equipped with the norm
\[
  \|u\|_{\ell^2(\Gamma,E)}
  :=
  \Bigl(\sum_{p\in\Gamma}\|u(p)\|_E^2\Bigr)^{1/2}.
\]
In this setting, band operators are described in the same way as above, except that the matrix entries become operator-valued,
\[
  H_{pq}\in\mathcal L(E),
\]
with
\[
  \|H_{pq}\|_{\mathcal L(E)}\le M
  \qquad\text{and}\qquad
  d(p,q)>m \Longrightarrow H_{pq}=0.
\]
The action of $H$ is then given by
\[
  (Hu)(p)=\sum_{q\in\Gamma}H_{pq}u(q).
\]

At the level of lower norms, the whole framework extends in a rather direct way. Indeed, for any bounded operator $T$ on a Banach space one may define
\[
  \nu(T):=\inf_{\|u\|=1}\|Tu\|,
\]
so in particular $\nu(H-\lambda)$ is well-defined on $\ell^2(\Gamma,E)$. Likewise, the local lower norms of the finite sections remain meaningful without modification.

Moreover, the localisation operators are unchanged, since the tent functions remain scalar-valued:
\[
  (W_{L,x}u)(p):=w_{L,x}(p)\,u(p).
\]
Similarly, the rectangular finite sections are defined in exactly the same way,
\[
  H_{L,x}=P_{L+m,x}HP_{L,x}^*
  \;:\;
  \ell^2\bigl(B_L(x)\cap\Gamma,E\bigr)
  \longrightarrow
  \ell^2\bigl(B_{L+m}(x)\cap\Gamma,E\bigr),
\]
and all support considerations remain purely geometric. Thus the main points to check are again Theorem~\ref{thm:commutator}, Lemma~\ref{lem:localise}, and Corollary~\ref{cor:global-local}.

These arguments, however, depend only on the geometry of $\Gamma$, the support properties of the finite sections, the Lipschitz bounds for the tent functions, and basic norm estimates such as
\[
  \|H_{pq}u(q)\|_E
  \le
  \|H_{pq}\|_{\mathcal L(E)}\,\|u(q)\|_E.
\]
In particular, the use of the Cauchy--Schwarz inequality remains fully available, since it is applied to the scalar sequence
\[
  \bigl(\|u(p)\|_E\bigr)_{p\in\Gamma}\in \ell^2(\Gamma),
\]
and thus relies only on the Hilbert-space structure of the outer $\ell^2$-summation over $\Gamma$, not on any Hilbert-space structure of the coefficient space $E$. For this reason, the localisation procedure and the resulting approximation results for lower norms carry over to the Banach-valued setting with essentially no change. The only point where a minor modification is needed is the passage from the lower norm to a combined lower norm: in the Banach-valued setting, the Hilbert-space adjoint $H^*$ has to be replaced by the Banach-space adjoint $H'$, which under the natural identification $\ell^2(\Gamma,E)'\cong \ell^2(\Gamma,E')$ is given by
\[
  (H'\varphi)(q)=\sum_{p\in\Gamma} H_{pq}'\,\varphi(p).
\]
Thus the natural analogue of the combined lower norm is expected to involve both $\nu(H-\lambda)$ and $\nu(H'-\lambda)$.

\subsection{Possible extensions: \texorpdfstring{$\ell^p$}{lp}-spaces.}
The arguments of Theorem~\ref{thm:commutator}, Lemma~\ref{lem:avg-W},
Lemma~\ref{lem:localise}, and Corollary~\ref{cor:global-local} are not specific to
$p=2$. For every $1 \leq p<\infty$, one may work on $\ell^p(\Gamma)$ and replace the
use of Cauchy--Schwarz in the proof of Theorem~\ref{thm:commutator} by H\"older's
inequality. Since each inner sum contains at most $\gamma$ terms, one has
\[
  \Bigl(\sum_{q\sim p} |\psi(q)|\Bigr)^p
  \le
  \gamma^{p-1}\sum_{q\sim p} |\psi(q)|^p,
\]
which leads, after the same summation and integration steps as before, to
\[
  \int_X \|[W_{L,x},H]\psi\|_{\ell^p(\Gamma)}^p\,d\mu(x)
  \le
  \frac{\gamma^p m^p M^p C_{\mathrm{geom},p}}{L^p}\,
  \|\psi\|_{\ell^p(\Gamma)}^p\,\|w_{L,e}\|_{L^p(X)}^p,
\]
where
\[
  C_{\mathrm{geom},p}:=2^p C_{\mathrm{dbl}}^{\,k},
  \qquad
  k=\Bigl\lceil\log_2\bigl(2(1+m/L)\bigr)\Bigr\rceil.
\]

Likewise, Lemma~\ref{lem:avg-W} extends verbatim to
\[
  \int_X \|W_{L,x}\phi\|_{\ell^p(\Gamma)}^p\,d\mu(x)
  =
  \|\phi\|_{\ell^p(\Gamma)}^p\,\|w_{L,e}\|_{L^p(X)}^p,
\]
by the same Tonelli argument and left-invariance of $d$ and $\mu$. Once these two
ingredients are available, Lemma~\ref{lem:localise} and
Corollary~\ref{cor:global-local} follow by the same reasoning as before, using the
triangle inequality in $L^p(X;\ell^p(\Gamma))$. Thus the lower-norm based part of the theory admits a direct \(\ell^p\)-analogue for every \(1 \leq p<\infty\), with appropriately modified constants.

The only genuinely new point arises later when one passes to combined lower norms,
where for $p\neq2$ the Hilbert-space adjoint must be replaced by the Banach adjoint on
the dual \(\ell^q\)-space, with \(1/p+1/q=1\).

An analogous extension also holds for $\ell^\infty(\Gamma)$. In that case, the averaging over
$X$ is replaced by direct estimates in the supremum norm, and the analogue of
Lemma~\ref{lem:avg-W} becomes
\[
  \sup_{x\in X}\|W_{L,x}\phi\|_{\ell^\infty(\Gamma)}
  = \|\phi\|_{\ell^\infty(\Gamma)},
\]
since $0\le w_{L,x}\le 1$ and $w_{L,p}(p)=1$ for $p\in\Gamma$. Moreover, using the Lipschitz
bound for the tent functions, one obtains
\[
  \|[W_{L,x},H]\psi\|_{\ell^\infty(\Gamma)}
  \le
  \frac{\gamma mM}{L}\,\|\psi\|_{\ell^\infty(\Gamma)},
\]
Thus the localisation error is again of order $O(1/L)$, but the argument in Theorem \ref{thm:commutator} no longer uses
$L^p(X)$-averaging.

\bibliographystyle{amsplain}
\bibliography{literature}

\end{document}